\theoremstyle{theorem}
\newtheorem{theorem}{Theorem}
\newtheorem{proposition}{Proposition}
\newtheorem{lemma}{Lemma}
\theoremstyle{definition}
\newtheorem{remark}{Remark}
\newtheorem{question}{Question}
\begin{document}

\title[MINIMAL SURFACE WITH TWO CATENOID
ENDS AND ONE ENNEPER END]{AN EXAMPLE OF A MINIMAL SURFACE OF GENUS ONE WITH TWO CATENOID 
ENDS AND ONE ENNEPER END}


\author{Jos\'{e} Antonio M. Vilhena}
\address{Universidade Federal do Par\'a\\ Instituto de Ci\^encias Exatas e Naturais\\ 66075-110 \\ Bel\'{e}m - PA \\ BRASIL.}
\address{Jos\'{e} Antonio M. Vilhena}
\email{vilhena@ufpa.br}
\thanks{FACMAT - ICEN - UFPA}

\subjclass[2010]{Primary 53A10; Secondary 53C42}

\date{December 23, 2019}

\maketitle

\begin{abstract}
In this paper we construct an example  of a complete immersed minimal surface in $\mathbb{R}^3$ of genus one with two embedded catenoid-type ends, one Enneper-type end and total Gauss curvature $-16\pi.$ The proof of the existence of this example, was obtained using the Weierstrass representation, the theory of elliptic functions and explicitly solving the period problem.
\end{abstract}

\bigskip
\noindent

\noindent   

\section{Introduction}

At the beginning of the 80's and 90's, the Weierstrass representation and the theory of elliptic functions was a fundamental tool for finding a large quantity of new examples of minimal surfaces: Chen-Gackstatter, Costa, Hoffman, Meeks and Karcher (see \cite{Chen.1982}, \cite{Costa.1989}, \cite{Hoffman.1990},\cite{Hoffman.1985}) among others. 

The Chen-Gackstatter surface (C--G) (see \cite{Chen.1982}) was the first example of a complete minimal surface of genus one with one Enneper-type end and total curvature $-8\pi.$  The construction of this example  was obtained using  the elliptic function $\wp$ and  the Weierstrass data $\displaystyle{(g, \mathbf \eta)=(c\, \wp'/\wp,2\wp \,dz), \ c=\sqrt{3\pi/2g_2}.}$ In the same paper, Chen-Gackstatter have proven that there is a complete minimal surface with genus two and and one Enneper-type end. The genus one C-G surface was generalized by Karcher \cite{Karcher.1989} and the genus two C-G surface was generalized by Thayer \cite{Thayer.1995}. These generalizations are similar to C-G surfaces, but with higher winding order at the end. Other generalizations of the Chen-Gackstatter surface also can be found in \cite{Nedir.1994}, \cite{Fang.1990}, \cite{Kang.2003}. Among many other surfaces of genus one with three ends, we can mention the beautiful Costa surface (see \cite{Costa.1984}) given by Weierstrass data $\displaystyle{(g, \mathbf \eta)=(a/\wp',\wp \,dz), \ a=2e_1\sqrt{2\pi}}$ that is an embedded minimal surface with one planar end and two catenoid ends.

In this paper we first give a description of the Matthias Weber's minimal surface \cite{Weber.2015} and using the theory of elliptic function, we explicitly solve the period problem. The main goal of this paper, will be to prove that there exists a complete minimal surface of genus one, with two parallel catenoid ends and one Enneper end, explicitly solving the period problem. 

\begin{theorem}\label{Teorema1}
There exists a complete minimal immersion $S$ in $\mathbb{R}^3$, of genus
one, with three ends and the following properties:
\begin{enumerate}
	\item The total curvature of $S$ is $-16 \pi$;
	\item $S$ has two catenoid-type ends and one Enneper-type end;
	\item The symmetry group of $S$ is the dihedral group $G$ with $8$ elements generated by
	\[
	A_{\beta}=\left[
	\begin{array}{ccc}
	1&0&0\\
	0&-1&0\\
	0&0&1
	\end{array}
	\right], \ \ \ 
	A_{\rho}=\left[
	\begin{array}{ccc}
	0&-1&0\\
	1&0&0\\
	0&0&-1
	\end{array}
	\right];
	\]
	\item The Weierstrass data is given by
	\[
	\left
	\{
	\begin{array}{ll}
	 g &= c \, \displaystyle{\frac{(\wp - 3 e_1)(\wp+3e_1)}{\wp'}, \quad c=\frac{1}{e_1}\sqrt{\frac{6\pi}{73}}} \, , \ e_1=\wp(1/2);\\
	\mathbf \eta&=2\wp \, dz.
	\end{array}
	\right.
	\]
\end{enumerate}
\end{theorem}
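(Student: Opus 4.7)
The plan is to work on the square torus $T = \mathbb{C}/(\mathbb{Z} + i\mathbb{Z})$, for which $g_3 = 0$, $g_2 = 4 e_1^2$, and the half-period values are $e_1 = \wp(1/2) > 0$, $e_2 = \wp(i/2) = -e_1$, $e_3 = \wp((1+i)/2) = 0$. First I would verify that the proposed Gauss map $g$ has exactly four simple zeros (at the four preimages of $\pm 3 e_1$ under $\wp$) and four simple poles at $0, 1/2, i/2, (1+i)/2$, by counting the pole of order four of $\wp^2 - 9 e_1^2$ at the origin against the pole of order three of $\wp'$ together with its simple zeros at the half-periods. Consequently $\deg g = 4$ and the total curvature of $S$ equals $-4\pi \deg g = -16 \pi$.

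Next I would analyze the ends by studying the poles of $\phi_3 = g\eta$. Using $\wp'^2 = 4 \wp^3 - 4 e_1^2 \wp$ one rewrites
\[
g\, \eta \;=\; \tfrac{c}{2}\, \wp'\, dz \;-\; 16\, c\, e_1^2\, \tfrac{\wp}{\wp'}\, dz,
\]
where the first term is exact. Local expansions then show: $g\eta$ has a triple pole with zero residue at $z = 0$ (Enneper end); simple poles with real residues $\mp 4 c e_1$ at $z = 1/2$ and $z = i/2$ (two catenoid ends); and at $z = (1+i)/2$ the double zero of $\wp$ cancels the simple pole of $g$, producing a regular point of the immersion. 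Checking that $\phi_1, \phi_2$ have only poles of even order and zero residue at each end, and that $\eta$ does not vanish at the zeros of $g$, rules out branch points and yields a complete immersion on $T \setminus \{0, 1/2, i/2\}$.

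For the period problem, $H_1(T \setminus \{0, 1/2, i/2\})$ has rank four, and the residue conditions on small loops around the catenoid ends hold automatically because $c, e_1 \in \mathbb{R}$. For the torus cycles $a, b$ I would exploit the biholomorphism $\tau_2 : z \mapsto i z$ of $T$: from $\wp \circ \tau_2 = -\wp$ and $\wp' \circ \tau_2 = i \wp'$ one obtains $g \circ \tau_2 = -i g$ and $\tau_2^* \eta = -i \eta$, hence
\[
\tau_2^* \phi_1 = -\phi_2, \qquad \tau_2^* \phi_2 = \phi_1, \qquad \tau_2^* \phi_3 = -\phi_3,
\]
which realizes the isometry $A_\rho$ on $S$. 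Since $\tau_2$ sends $[a] \mapsto [b]$ and $[b] \mapsto -[a]$ on homology, it automatically forces $\int_a \phi_3 = \int_b \phi_3 = 0$ and relates $\int_b \phi_1 = -\int_a \phi_2$, $\int_b \phi_2 = \int_a \phi_1$. The anti-holomorphic involution $\tau_1 : z \mapsto \bar z$ realizes $A_\beta$ and implies $\int_a \phi_1 \in \mathbb{R}$, $\int_a \phi_2 \in i\mathbb{R}$, $\int_b \phi_1 \in i\mathbb{R}$, $\int_b \phi_2 \in \mathbb{R}$. After these reductions, the entire period problem collapses to the single scalar equation $\int_a \phi_1 = 0$.

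To evaluate $\int_a \phi_1 = \tfrac{1}{2}\int_a \eta - \tfrac{1}{2}\int_a g^2 \eta$, I would reduce $g^2 \eta$ via the identities $\wp^2 = \wp''/6 + e_1^2/3$ and $1/(\wp^2 - e_1^2) = \tfrac{1}{2 e_1}\bigl(1/(\wp - e_1) - 1/(\wp + e_1)\bigr)$, together with the representations $1/(\wp - e_j) = \tfrac{1}{2 e_1^2}\, \wp(z - \omega_j) + C_j$ obtained by matching principal parts at the half-periods $\omega_j$. The integrals $\int_a \wp(z - \omega_j)\, dz$ all collapse to $-2\, \eta_1$ (where $\eta_1$ is the Weierstrass quasi-period), and the $\eta_1$-contributions from the two partial fractions cancel, producing
\[
\int_a \phi_1 \;=\; -2\, \eta_1 \;+\; \tfrac{73}{6}\, c^2\, e_1^2.
\]
Legendre's relation $\eta_1 \omega_2 - \eta_2 \omega_1 = \pi i/2$ combined with the square-torus identity $\eta_2 = -i\, \eta_1$ (itself a consequence of $\zeta(i z) = -i\, \zeta(z)$) gives $\eta_1 = \pi/2$, and the vanishing of $\int_a \phi_1$ then forces $c^2 = 6\pi/(73\, e_1^2)$, the stated value. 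The main obstacle will be to carry out this partial-fraction reduction cleanly enough that the cancellation of $\eta_1$ is transparent and the integer $73$ emerges explicitly; once that is in hand, identifying the full symmetry group as dihedral of order $8$ follows from the relations $A_\rho^4 = A_\beta^2 = (A_\rho A_\beta)^2 = I$ already encoded in $\tau_1, \tau_2$.
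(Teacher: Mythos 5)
Your proposal is correct, and the endgame (the value $\int_a\phi_1=-2\eta_1+\tfrac{73}{6}c^2e_1^2$ with $\eta_1=\zeta(1/2)=\pi/2$, forcing $c^2=6\pi/(73\,e_1^2)$) agrees exactly with what the paper obtains: its explicit computation gives $\int_\alpha\phi_1=-\pi+\tfrac{73}{6}c^2e_1^2$ after substituting $\lambda=3e_1$. The routes differ in two substantive ways. First, the paper starts from a one-parameter family $g=c(\wp-3e_1)(\wp+\lambda)/\wp'$, evaluates all four torus periods of $\phi_1,\phi_2$ explicitly via its Lemmas 4--6 (reduction to $\zeta(z-\omega_j)$ and the quasi-period values), and shows the two nontrivial period equations are simultaneously solvable only for $\lambda\in\{e_1,3e_1\}$; you instead fix $\lambda=3e_1$ from the outset and exploit the order-four biholomorphism $z\mapsto iz$ (which requires the numerator of $g$ to be even in $\wp$, i.e.\ precisely $\lambda=3e_1$) together with $z\mapsto\bar z$ to collapse the period problem to the single scalar equation $\int_a\phi_1=0$. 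Your reduction is cleaner and simultaneously delivers the symmetry group, whereas the paper's longer computation is what actually discovers $\lambda=3e_1$; since the theorem statement prescribes the Weierstrass data, your shortcut is legitimate here. (One small point worth making explicit: $\tau_2$ identifies the cycles only up to loops about the punctures, so the relations $\int_b\phi_j=\pm\int_a\phi_{j'}$ hold modulo $2\pi i$ times residues; this is harmless because the residues of $\phi_1,\phi_2$ vanish and those of $\phi_3$ are real, so the real periods are unaffected.) Second, your decomposition $g\eta=\tfrac{c}{2}\wp'\,dz-16ce_1^2(\wp/\wp')\,dz$ for the end analysis is a tidier route to the residues $\mp4ce_1$ at $1/2$ and $i/2$ than the paper's partial fractions in $\wp'/(\wp\mp e_1)$, and it reproduces the same values.
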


\begin{figure}[h]
\subfigure[\label{Figure_a}]{
\includegraphics[totalheight=6.3cm]{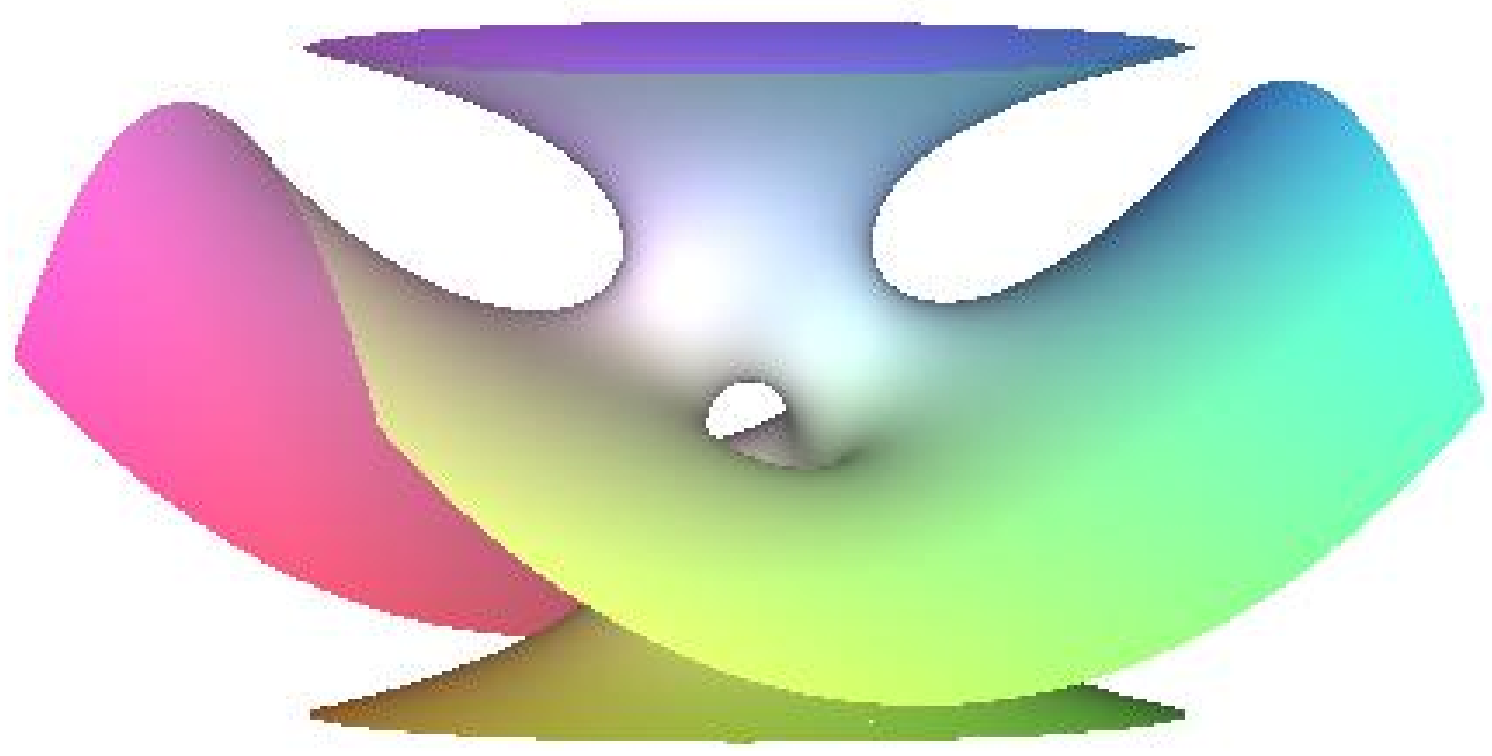}}
\subfigure[\label{Figure_b}]{
\includegraphics[totalheight=6cm]{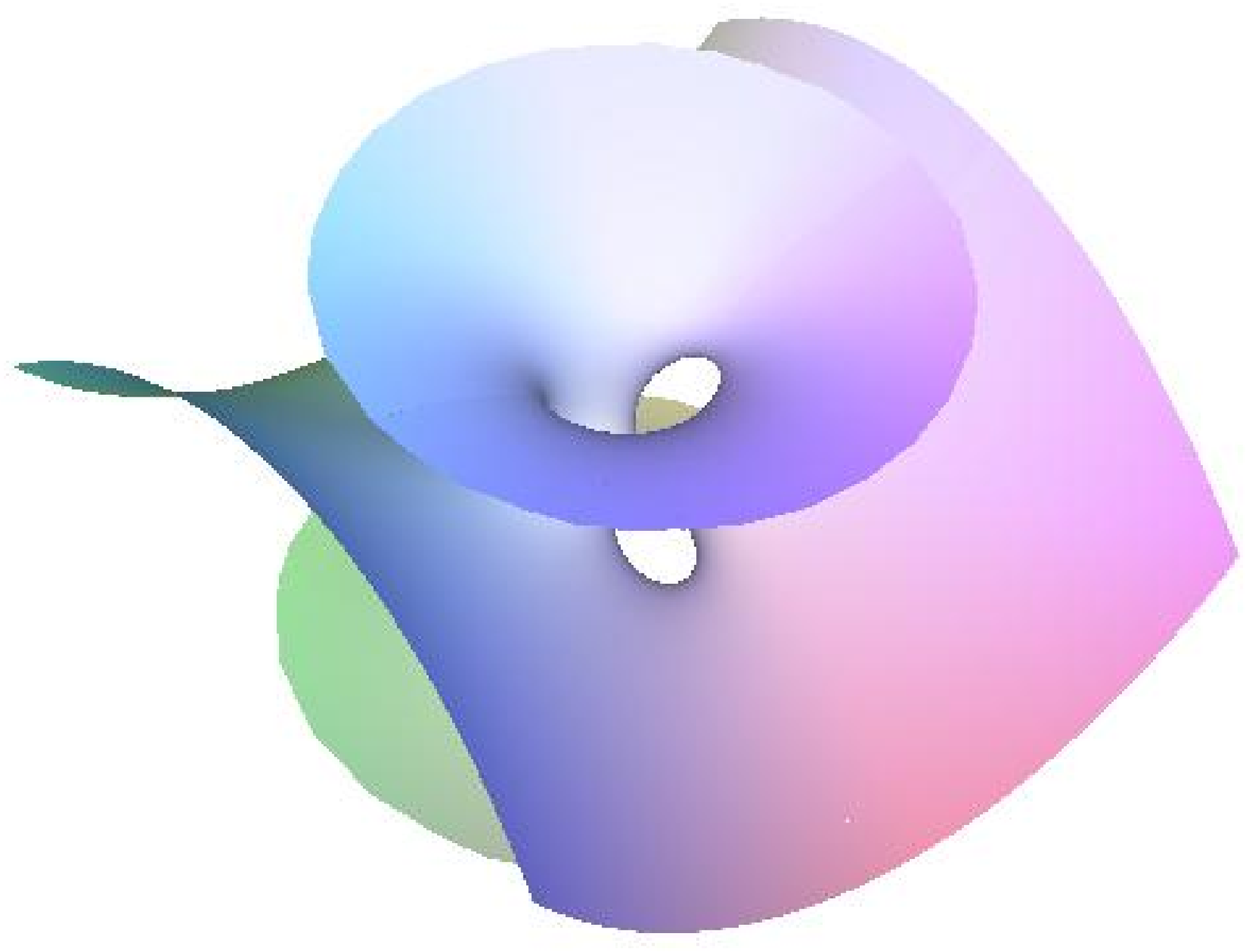}}
\caption{Computer graphics of $S$}
\end{figure}

\section{Preliminaries}

\subsection{The Weierstrass representation}

The main tool we will use to prove the existence of the surface described in Theorem \ref{Teorema1} is the Weierstrass representation formula (see  \cite{Fang.1990}, \cite{Osserman.2013}).

\begin{proposition}\label{Prop1} Let $\overline{M}$ be a compact Riemann surface and $M =\overline{M}-\{p_1, \cdots, p_n\}.$ Suppose $\overline{g}\!: \overline{M} \rightarrow \mathbb{C}\cup \{\infty\} $ is a meromorphic function and $\eta$ is a meromorphic $1$-form such that whenever $g=\overline{g}|_M$ has a pole of order $k$, then $\eta$ has a zero of order $2k$ and $\eta$ has no other zeros on $M$. Let
\begin{equation}\label{WR}
\phi_1=\frac{1}{2}\left(1-g^2 \right)\eta , \  \phi_2=\frac{i}{2}\left(1+g^2 \right)\eta,  \ \phi_3=g \eta. 
\end{equation}
If for any closed curve $\gamma$ in $M$,
\begin{equation}\label{PP1}
\text{Re} \int_{\gamma} \phi_j=0, \ \text{for} \ \  j =1,2,3,
\end{equation}
and every divergent curve $\ell$ in $M$ has infinity length, i.e.,
\begin{equation}\label{PS0}
\int_{\ell} (1+|g|^2)|\eta|=\infty,
\end{equation}
then the surface $S$, defined by $X\!:M \rightarrow \mathbb{R}^3$, is a complete regular minimal surface, where
\begin{equation}\label{PS}
X(z)=\text{Re}\left(\int_{z_0}^z \phi_1, \int_{z_0}^z \phi_2, \int_{z_0}^z \phi_3\right).
\end{equation}
Here, $z_0$ is a fixed point of $M$. Moreover, 
the total curvature of $S$ is
\begin{equation}\label{CT}
C_T(S) = -4 \pi m,
\end{equation}
where $m$ is the degree of $\overline{g}.$
\end{proposition}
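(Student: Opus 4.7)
\medskip

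\noindent\textbf{Proof plan.} I will realize $S$ by applying Proposition \ref{Prop1} to a specific choice of $\overline{M}$ and Weierstrass data, and then verify each hypothesis.

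\medskip

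\noindent\emph{Step 1: Setup and admissibility of the data.} I take $\overline{M}=\mathbb{C}/\Lambda$ with the square lattice $\Lambda=\mathbb{Z}+i\mathbb{Z}$, so that $e_1=\wp(1/2)$, $e_2=\wp((1+i)/2)=0$, $e_3=\wp(i/2)=-e_1$. The three punctures will be $p_1=0$, $p_2=1/2$, $p_3=i/2$, and I set $M=\overline{M}\setminus\{p_1,p_2,p_3\}$. Using $g=c(\wp^2-9e_1^2)/\wp'$ and $\eta=2\wp\,dz$, a direct order count shows: $g$ has simple poles at $0,\;1/2,\;(1+i)/2,\;i/2$; its zeros are the four points $\wp=\pm 3e_1$; $\eta$ has a double pole at $0$ and (because $e_2=0$) a double zero at $(1+i)/2$. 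At the only pole of $g$ lying in $M$, namely $(1+i)/2$, the $1$-form $\eta$ has a zero of order exactly $2=2\cdot 1$, and $\eta$ has no other zeros on $M$. Thus the pairing condition of Proposition \ref{Prop1} is satisfied. From $\deg \overline{g}=4$ formula (\ref{CT}) yields $C_T(S)=-16\pi$.

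\medskip

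\noindent\emph{Step 2: Ends and completeness.} Computing the orders of $\phi_1,\phi_2,\phi_3$ at each puncture, I will verify that $p_2=1/2$ and $p_3=i/2$ produce simple poles in $\phi_3$ with real residues and simple poles with purely imaginary residues in $\phi_1,\phi_2$, which is the asymptotic signature of a catenoid end; and that at $p_1=0$ the forms $\phi_1,\phi_2$ have poles of order $4$, yielding an Enneper-type end (and contributing the excess $-8\pi$ beyond the two $-4\pi$ catenoid ends). The divergent-length condition (\ref{PS0}) then holds because $(1+|g|^2)|\eta|$ blows up at each puncture.

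\medskip

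\noindent\emph{Step 3: Symmetries and reduction of the period problem.} The two automorphisms $\sigma(z)=\bar z$ and $\tau(z)=iz$ of $\overline{M}$ generate a dihedral group of order $8$ preserving the punctures. Using $\wp(\bar z)=\overline{\wp(z)}$ and $\wp(iz)=-\wp(z)$, $\wp'(iz)=i\wp'(z)$, one checks that $\sigma$ and $\tau$ induce precisely the orthogonal transformations $A_\beta$ and $A_\rho$ on $X$. The key consequence is that the group action identifies homology classes: the images under $\tau$ of the horizontal $\alpha$-cycle and of a small loop about $p_2$ are, respectively, the vertical $\beta$-cycle and a small loop about $p_3$. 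Therefore the full period problem on the four generators of $H_1(M;\mathbb{Z})$ collapses to (i) the residue/period at one catenoid end $p_2$, and (ii) the $\alpha$-period of $\phi_3$.

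\medskip

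\noindent\emph{Step 4: Explicit solution and determination of $c$.} The hard part, and the main obstacle, is carrying out (i)--(ii) explicitly. For (i) the vanishing of the horizontal logarithmic period at $p_2$ reduces, via the residue theorem, to an identity among values of $\wp$ and $\wp'$ at the half-periods together with $\int_0^1 \wp\,dz$, which is $-2\eta_1$ in terms of the Weierstrass zeta function; the Legendre relation $\eta_1\omega_2-\eta_2\omega_1=i\pi/2$ specialized to the square lattice (where $\eta_2=-i\eta_1$ and $\omega_1=1,\;\omega_2=i$) gives $\eta_1=\pi/2$, making the computation explicit. For (ii), the $\alpha$-period condition for $\phi_3=g\eta$ becomes, after integration of the elliptic factors by parts, a single numerical identity of the form $c^2\cdot(\text{positive constant})=6\pi$; the constant works out to $73/e_1^2$, forcing $c=\frac{1}{e_1}\sqrt{6\pi/73}$ as stated. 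Once $c$ is fixed, all remaining period conditions are automatic by the symmetry reduction of Step 3, completing the verification of (\ref{PP1}) and hence the proof.
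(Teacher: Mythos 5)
Your proposal does not prove the statement it is attached to. Proposition \ref{Prop1} is the general Weierstrass representation theorem: for \emph{any} compact Riemann surface $\overline{M}$, any finite puncture set, and \emph{any} meromorphic pair $(\overline{g},\eta)$ satisfying the stated zero/pole matching, period, and divergent-length conditions, the map $X$ in \eqref{PS} is a well-defined, complete, regular minimal immersion with total curvature $-4\pi\deg\overline{g}$. What you have written instead is a construction of one particular surface (the surface of Theorem \ref{Teorema1}) \emph{by applying} Proposition \ref{Prop1} as a black box --- you even say so in your opening sentence. Exhibiting a single instance in which the hypotheses hold does not establish the implication asserted by the proposition, so the entire argument is aimed at the wrong target. (For what it is worth, the paper itself does not prove Proposition \ref{Prop1} either; it quotes it from the literature, and your Steps 1--4 are in substance a sketch of the paper's Section 4, the proof of Theorem \ref{Teorema1}.)

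A proof of the actual statement would have to address the following points, none of which appear in your proposal: (a) the matching condition ``pole of $g$ of order $k$ $\Rightarrow$ zero of $\eta$ of order $2k$, and no other zeros of $\eta$'' makes each $\phi_j$ holomorphic on $M$ and keeps the induced metric $ds=\tfrac12(1+|g|^2)|\eta|$ nondegenerate, so $X$ is a regular immersion; (b) the real-period condition \eqref{PP1} makes $X$ independent of the path from $z_0$ to $z$, hence well defined on $M$; (c) the algebraic identity $\phi_1^2+\phi_2^2+\phi_3^2=0$ together with the harmonicity of $\operatorname{Re}\int\phi_j$ shows $X$ is a conformal minimal immersion; (d) condition \eqref{PS0} is exactly completeness of the metric $ds$; and (e) the Gauss map of $X$ is the composition of $\overline{g}$ with stereographic projection, so $\int_M K\,dA$ equals $-4\pi$ times the degree of $\overline{g}$ by the area formula for the sphere covered $m$ times. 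If your intent was to prove Theorem \ref{Teorema1}, your outline is broadly aligned with the paper's strategy (symmetry reduction of the periods, explicit evaluation via $\zeta$ and the Legendre relation, and the value $c=\tfrac1{e_1}\sqrt{6\pi/73}$), but it should be submitted against that theorem, not against Proposition \ref{Prop1}.
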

\subsection{The Weierstrass $\wp$ and $\zeta$ functions}
For the proof of Theorem \ref{Teorema1}, we get some properties of the Weierstrass
$\wp$ function associated to lattice $\mathcal{L} = [ 1, i]$. The proofs of the results below can be found in  \cite{Gray.2017}, \cite{Chand.1985} and  \cite{Costa.1984}.
Let $\mathcal{L} = [w_1,w_2]$, $\text{Im}\displaystyle{\left(\frac{w_2}{w_1}\right)}>0$ be a lattice of $\mathbb{C}$. The Weierstrass $\wp$ function of the lattice $\mathcal{L}$ is a doubly
periodic meromorphic function, defined by 
\begin{equation}\label{PW}
\wp(z)=\frac{1}{z^2}+\sum_{\substack{\Omega \in \mathcal{L} \\ \Omega \neq 0}}\left(\frac{1}{(z-\Omega)^2}-\frac{1}{\Omega^2} \right), 
\end{equation}
where $\Omega=m w_1+n w_2,$ \ for all $(m,n) \in \mathbb{Z} \times \mathbb{Z}$ and $(m, n ) \neq (0,0)$.

We will also need the Weierstrass $\zeta$ functions, defined by

\begin{equation}\label{ZW}
\zeta(z)=\frac{1}{z}+\sum_{\substack{\Omega \in \mathcal{L} \\ \Omega \neq 0}}\left(\frac{1}{z-\Omega}+\frac{1}{\Omega}+\frac{z}{\Omega^2} \right). 
\end{equation}
From \eqref{PW} and \eqref{ZW} we have that $\zeta$ is related to $\wp$ by 
\begin{equation}\label{ZP}
\zeta(z)'=-\wp(z).
\end{equation}

It is possible to express $\wp(z+z_1)$ in terms of $\wp(z)$,  and $\wp(z_1)$ and their derivatives. 

\begin{proposition}
If $z \neq z_1$ modulo $(w_1, w_2),$ then we have
	\begin{equation}
\wp(z+z_1)=\frac{1}{4}\left(\frac{\wp'(z)-\wp'(z_1)}{\wp(z)-\wp(z_1)} \right)^2 - \wp(z) - \wp(z_1).
\end{equation}
\end{proposition}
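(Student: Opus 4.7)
The plan is to use the classical zero-counting argument for elliptic functions, interpolating $\wp'$ by a linear function of $\wp$ and then exploiting the algebraic relation $(\wp')^2=4\wp^3-g_2\wp-g_3$.

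First, assuming momentarily that $\wp(z)\neq\wp(z_1)$, I would set
\[
A=\frac{\wp'(z)-\wp'(z_1)}{\wp(z)-\wp(z_1)}, \qquad B=\wp'(z)-A\,\wp(z),
\]
so that the auxiliary function $F(w)=\wp'(w)-A\,\wp(w)-B$ is an elliptic function with respect to $\mathcal{L}$ that vanishes at $w=z$ and $w=z_1$. The only pole of $F$ in a fundamental domain is at $w=0$, inherited from $\wp'$, and it has order $3$. Consequently $F$ has exactly three zeros in a fundamental domain (counted with multiplicity), and by Abel's theorem (the sum of zeros minus the sum of poles of an elliptic function belongs to $\mathcal{L}$), the third zero must be $w_3\equiv -(z+z_1)\pmod{\mathcal{L}}$.

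Next, I would observe that at any zero $w_j$ of $F$ we have $\wp'(w_j)=A\,\wp(w_j)+B$, and squaring this identity together with the cubic relation $(\wp')^2=4\wp^3-g_2\wp-g_3$ shows that $x_j:=\wp(w_j)$ satisfies
\[
4x^3-A^2x^2-(2AB+g_2)x-(B^2+g_3)=0.
\]
Using that $\wp$ is even, so $\wp(-(z+z_1))=\wp(z+z_1)$, the three values $\wp(z)$, $\wp(z_1)$, $\wp(z+z_1)$ are precisely the roots of this cubic. Vieta's formula applied to the $x^2$-coefficient yields
\[
\wp(z)+\wp(z_1)+\wp(z+z_1)=\frac{A^2}{4},
\]
which rearranges to the stated identity.

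The main technical obstacle will be justifying the identification of the roots of the cubic with $\{\wp(z),\wp(z_1),\wp(z+z_1)\}$ in degenerate situations, for instance when two of the zeros of $F$ collide or when $\wp(z)=\wp(z_1)$ (so that $A$ is initially undefined). I would handle these cases by a multiplicity bookkeeping argument (if $F$ has a double zero at $z$, then $z$ is a double root of the cubic, corresponding to the tangent-line construction that yields the duplication formula), or more economically by continuity: both sides of the proposed identity are meromorphic functions of $(z,z_1)$ on the torus, and since they agree on the open dense set where $\wp(z)\neq\wp(z_1)$, they agree everywhere $z\not\equiv z_1\pmod{\mathcal{L}}$.
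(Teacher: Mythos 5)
Your proof is correct. Note that the paper itself offers no proof of this proposition: it is quoted as a standard fact about the Weierstrass $\wp$-function, with the reader referred to the cited references on elliptic functions. The argument you give is precisely the classical one found in those references: the auxiliary elliptic function $F(w)=\wp'(w)-A\wp(w)-B$ has a single pole of order $3$ at $w=0$, hence three zeros summing to a lattice point, which forces the third zero to be $-(z+z_1)$; squaring $\wp'=A\wp+B$ at the zeros and invoking $(\wp')^2=4\wp^3-g_2\wp-g_3$ produces the cubic $4x^3-A^2x^2-(2AB+g_2)x-(B^2+g_3)=0$ whose roots are $\wp(z),\wp(z_1),\wp(z+z_1)$, and Vieta gives the identity. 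The one genuinely delicate point --- that the multiset of roots of the cubic coincides with $\{\wp(z),\wp(z_1),\wp(z+z_1)\}$ counted with multiplicity, even when some of these values coincide --- is exactly the point you flag, and your proposed resolution by continuity (both sides are meromorphic in $(z,z_1)$ and agree on the dense open set where the three zeros of $F$ and their $\wp$-values are pairwise distinct) is legitimate and standard; an alternative clean way to settle it is to observe that $4\wp^3-A^2\wp^2-(2AB+g_2)\wp-(B^2+g_3)=-F(w)F(-w)$ and compare divisors. Nothing is missing.
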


\begin{proposition}
The elliptic function $\wp(z)$ satisfies the differential equation
\begin{equation}
\wp'(z)^2= 4 \wp(z)^3 - g_2 \wp(z) - g_3,
\end{equation}
where 
\begin{equation}
g_2 = 60 \sum_{\substack{\Omega \in \mathcal{L} \\ \Omega \neq 0}}   \Omega^{-4} , \ \ 
g_3 = 140 \sum_{\substack{\Omega \in \mathcal{L} \\ \Omega \neq 0}}   \Omega^{-6}. 
\end{equation}
\end{proposition}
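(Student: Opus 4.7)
The plan is to compare Laurent expansions at $z=0$ and invoke Liouville's theorem. First, I would expand each summand of \eqref{PW} using
\[
\frac{1}{(z-\Omega)^2}-\frac{1}{\Omega^2} = \sum_{k\ge 1}(k+1)\,\frac{z^k}{\Omega^{k+2}},
\]
valid for $|z|<\min_{\Omega\neq 0}|\Omega|$. Interchanging summations (justified by absolute convergence on a small punctured disc about $0$) and using that the Eisenstein sums $\sum_{\Omega\neq 0}\Omega^{-(2k+1)}$ vanish by the $\Omega\mapsto -\Omega$ symmetry of $\mathcal{L}$, one obtains the Laurent expansion
\[
\wp(z) = \frac{1}{z^2}+\sum_{k\ge 1}(2k+1)G_{2k+2}\,z^{2k} = \frac{1}{z^2} + 3G_4\,z^2 + 5G_6\,z^4 + O(z^6),
\]
where $G_{2k}=\sum_{\Omega\neq 0}\Omega^{-2k}$, so that $g_2=60\,G_4$ and $g_3=140\,G_6$ by definition.

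Next I would differentiate term by term and then square and cube these expansions. Elementary manipulation yields
\[
\wp'(z)^2 = \frac{4}{z^6} - \frac{24\,G_4}{z^2} - 80\,G_6 + O(z^2),\qquad
\wp(z)^3 = \frac{1}{z^6} + \frac{9\,G_4}{z^2} + 15\,G_6 + O(z^2).
\]
Subtracting and substituting $g_2=60\,G_4$, $g_3=140\,G_6$, the principal and constant terms cancel and one gets
\[
F(z) := \wp'(z)^2 - 4\,\wp(z)^3 + g_2\,\wp(z) + g_3 = O(z^2),
\]
so $F$ is holomorphic at $z=0$ with $F(0)=0$.

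Finally, $F$ is a doubly periodic meromorphic function on $\mathbb{C}$ because $\wp$ and $\wp'$ are $\mathcal{L}$-periodic. Its only possible poles in a fundamental parallelogram lie at lattice points, but by periodicity it suffices to inspect the origin, where the expansion above shows $F$ is holomorphic. Hence $F$ is an entire elliptic function, therefore bounded on $\mathbb{C}$, and by Liouville's theorem constant; since $F(0)=0$, we conclude $F\equiv 0$, which is the asserted identity.

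The only real obstacle is bookkeeping in the Laurent calculation: one must track enough coefficients to confirm that the $z^{-6}$, $z^{-2}$, and $z^0$ contributions to $\wp'^2-4\wp^3$ cancel with \emph{exactly} the multiples $4$, $60$, and $140$ of the Eisenstein sums. This is deterministic and presents no conceptual difficulty; all of the genuine mathematical content is packaged into the one-line application of Liouville's theorem to the elliptic function $F$.
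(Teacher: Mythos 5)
Your proof is correct and is precisely the classical argument: Laurent-expand $\wp$ at the origin via the Eisenstein series, check that $F=\wp'^2-4\wp^3+g_2\wp+g_3$ is an elliptic function with no poles vanishing at $0$, and conclude $F\equiv 0$ by Liouville. The paper gives no proof of this proposition, deferring instead to its references, where this same expansion-plus-Liouville argument is the standard one; your coefficient bookkeeping ($-60G_4/z^2-140G_6$ for the singular and constant parts of $\wp'^2-4\wp^3$) is accurate.
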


The three zeros of $\wp'(z)^2=4 \wp(z)^3 - g_2 \wp(z) - g_3$ in $\mathcal{L}=\mathcal{L}[1,i]$ are $w_1=1/2, \, w_2=(1+i)/2$ and $w_3=i/2$ and putting $e_1:=\wp(1/2),$ $e_2:=\wp((1+i)/2),$ $e_3:=\wp(i/2),$ we have that
\begin{equation}
\wp'(z)^2= 4 \left( \wp - e_1 \right) \left( \wp - e_2 \right) \left( \wp - e_3 \right), \  \  \wp=\wp(z),
\end{equation}
with
\begin{equation}\label{e123}
	e_1+e_2+e_3=0, \ 
	e_1e_2+e_2e_3+e_1e_3=-\displaystyle{\frac{g_2}{4}}, \ 
	e_1e_2e_3=\displaystyle{\frac{g_3}{4}}.
\end{equation}
From now on, we assume that $\mathcal{L}=\mathcal{L}[1,i]$  and that the fundamental domain is $F=\{ z \in \mathbb{C} \ | \ 0 \leqslant \text{Re}(z)  < 1, \   0 \leqslant \text{Im}(z) < 1\}.$ 
In this case, the results below are well known.
 
\begin{lemma}\label{lemma1}
Let $\mathcal{L} = \mathcal{L}[1, i]$ be a lattice and $z \in F$. Then, with the above notation, we have:
\begin{equation}\label{e2}
	e_2=g_3=0,
\end{equation}
\begin{equation}\label{e3}
	e_3=-e_1,
\end{equation}
\begin{equation}\label{g2}
	g_2=4e_1^2,
\end{equation}
\begin{equation}\label{EDPW}
	\wp'(z)^2= 4 \wp \left( \wp - e_1 \right) \left( \wp +e_1 \right),
\end{equation}
\begin{equation}\label{EP2}
\wp^2=\frac{\wp''}{6}+\frac{e_1^2}{3}.
\end{equation}
\end{lemma}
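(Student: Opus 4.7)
The plan is to exploit the square symmetry of the lattice $\mathcal{L}=[1,i]$, which is invariant under multiplication by $i$. This single observation, combined with the scaling relation $\wp(\lambda z;\lambda w_1,\lambda w_2)=\lambda^{-2}\wp(z;w_1,w_2)$, will yield all five identities almost mechanically.

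First I would verify $g_3=0$ directly from the series. Applying the bijection $\Omega\mapsto i\Omega$ to the summation lattice gives
\begin{equation*}
g_3=140\sum_{\Omega\neq 0}\Omega^{-6}=140\sum_{\Omega\neq 0}(i\Omega)^{-6}=i^{-6}g_3=-g_3,
\end{equation*}
so $g_3=0$. The same symmetry, applied to $\wp$, yields $\wp(iz)=i^{-2}\wp(z)=-\wp(z)$. Evaluating this at $z=1/2$ gives $e_3=\wp(i/2)=-\wp(1/2)=-e_1$, establishing \eqref{e3}. Evaluating at $z=(1+i)/2$ and using the lattice periodicity to write $\wp\bigl(i(1+i)/2\bigr)=\wp\bigl((1+i)/2-1\bigr)=e_2$ forces $e_2=-e_2$, hence $e_2=0$, completing \eqref{e2}.

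With $e_2=0$ and $e_3=-e_1$ in hand, the Vieta-type identities \eqref{e123} give $g_2/4=-\bigl(e_1e_2+e_2e_3+e_1e_3\bigr)=e_1^{\,2}$, which proves \eqref{g2}. Substituting these values of $e_2,e_3,g_2,g_3$ into the factored ODE $\wp'^2=4(\wp-e_1)(\wp-e_2)(\wp-e_3)$ immediately yields \eqref{EDPW}.

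Finally, for \eqref{EP2} I would expand and differentiate. From \eqref{EDPW},
\begin{equation*}
\wp'(z)^2=4\wp^3-4e_1^{\,2}\,\wp.
\end{equation*}
Differentiating both sides with respect to $z$ gives $2\wp'\wp''=12\wp^2\wp'-4e_1^{\,2}\wp'$; dividing by $2\wp'$ (valid away from the discrete zero set of $\wp'$, and then extending by continuity) produces $\wp''=6\wp^2-2e_1^{\,2}$, which is exactly \eqref{EP2} after rearrangement. There is no real obstacle here beyond recording the symmetry argument cleanly; the only subtle point is justifying that $\wp(iz)=-\wp(z)$ follows from $i\mathcal{L}=\mathcal{L}$, and this is handled by the scaling property of $\wp$ applied with $\lambda=i$.
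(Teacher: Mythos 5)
Your proof is correct. The paper itself offers no argument for this lemma --- it states these identities as ``well known'' and cites \cite{Gray.2017}, \cite{Chand.1985}, \cite{Costa.1984} --- so there is no in-paper proof to compare against; your argument via the square-lattice symmetry $i\mathcal{L}=\mathcal{L}$ and the homogeneity $\wp(\lambda z;\lambda\mathcal{L})=\lambda^{-2}\wp(z;\mathcal{L})$ is the standard route and every step (the bijection $\Omega\mapsto i\Omega$ killing $g_3$, the evaluation of $\wp(iz)=-\wp(z)$ at $z=1/2$ and $z=(1+i)/2$, the Vieta identities \eqref{e123}, and differentiating $\wp'^2=4\wp^3-4e_1^2\wp$ to get \eqref{EP2}) checks out.
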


\begin{lemma}\label{lemma2}
Let $\mathcal{L}$ be a lattice and $z \in F$. Then, we have:
\begin{equation}\label{pe1}
\frac{1}{\wp-e_1}=\frac{\wp(z-1/2)}{2e_1^2}-\frac{e_1}{2e_1^2},
\end{equation}
\begin{equation}\label{pe3}
\frac{1}{\wp+e_1}=\frac{\wp(z-i/2)}{2e_1^2}+\frac{e_1}{2e_1^2},
\end{equation}
\begin{equation}\label{EF}
\frac{1}{(\wp-e_1)(\wp+e_1)}=\frac{1}{2e_1}\cdot \frac{1}{\wp-e_1}-\frac{1}{2e_1}\cdot \frac{1}{\wp+e_1},
\end{equation}
\end{lemma}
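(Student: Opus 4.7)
The plan is to treat the three identities in turn. For the partial fraction identity \eqref{EF}, the proof is purely algebraic: decompose $\frac{1}{(\wp-e_1)(\wp+e_1)}$ as $\frac{A}{\wp-e_1}+\frac{B}{\wp+e_1}$ in the variable $\wp$; clearing denominators yields $1 = A(\wp+e_1)+B(\wp-e_1)$, and the choices $A=1/(2e_1)$, $B=-1/(2e_1)$ force this to be an identity of rational functions of $\wp$. This takes care of \eqref{EF} with no appeal to the specific lattice.

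For the two nontrivial identities \eqref{pe1} and \eqref{pe3}, the main tool is the addition formula
\[
\wp(z+z_1) = \tfrac14\!\left(\frac{\wp'(z)-\wp'(z_1)}{\wp(z)-\wp(z_1)}\right)^{\!2} - \wp(z)-\wp(z_1),
\]
specialized at a half-period where $\wp'(z_1)=0$. For \eqref{pe1}, take $z_1=-1/2$; since $\wp$ is even and $\wp'$ is odd, $\wp(-1/2)=e_1$ and $\wp'(-1/2)=0$. The formula collapses to
\[
\wp(z-1/2) = \frac{\wp'(z)^2}{4(\wp-e_1)^2} - \wp - e_1.
\]
Now I would substitute the differential equation \eqref{EDPW} from Lemma \ref{lemma1}, namely $\wp'(z)^2 = 4\wp(\wp-e_1)(\wp+e_1)$, to obtain
\[
\wp(z-1/2) \;=\; \frac{\wp(\wp+e_1)}{\wp-e_1} - (\wp+e_1) \;=\; \frac{(\wp+e_1)\bigl[\wp-(\wp-e_1)\bigr]}{\wp-e_1} \;=\; \frac{e_1(\wp+e_1)}{\wp-e_1}.
\]
Dividing by $2e_1^2$ and subtracting $\tfrac{e_1}{2e_1^2}=\tfrac{1}{2e_1}$ then gives $\tfrac{(\wp+e_1)-(\wp-e_1)}{2e_1(\wp-e_1)} = \tfrac{1}{\wp-e_1}$, which is \eqref{pe1}.

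For \eqref{pe3}, I would run the same argument with $z_1=-i/2$, using $\wp(i/2)=e_3=-e_1$ and $\wp'(i/2)=0$ from Lemma \ref{lemma1}. The addition formula and \eqref{EDPW} now yield
\[
\wp(z-i/2) \;=\; \frac{\wp(\wp-e_1)}{\wp+e_1} - (\wp-e_1) \;=\; \frac{-e_1(\wp-e_1)}{\wp+e_1},
\]
and the symmetric algebraic manipulation (divide by $2e_1^2$, add $1/(2e_1)$) produces $1/(\wp+e_1)$. There is no real obstacle here; the only thing to be careful about is the sign bookkeeping in the second identity, since the shift is by $i/2$ and $\wp(i/2)=-e_1$, which flips the role of the two factors compared with \eqref{pe1}. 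Everything else is a direct substitution of facts already established in Lemma \ref{lemma1} together with elementary algebra.
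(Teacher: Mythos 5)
Your proof is correct. The paper itself gives no argument for this lemma (it simply defers to the cited references on elliptic functions), and your derivation --- specializing the addition theorem at the half-periods $1/2$ and $i/2$, where $\wp'$ vanishes and $\wp$ takes the values $e_1$ and $-e_1$, then substituting the differential equation $\wp'^2=4\wp(\wp-e_1)(\wp+e_1)$ --- is the standard and complete way to establish \eqref{pe1} and \eqref{pe3}; the partial-fraction identity \eqref{EF} is indeed pure algebra. The only (harmless) point worth a remark is that the addition formula is stated for $z\not\equiv z_1$, so the identities are first obtained off the points where $\wp=\pm e_1$ and then hold everywhere by analytic continuation of meromorphic functions.
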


\begin{lemma}\label{lemma3}
Let $\mathcal{L}$ be a lattice and $z \in F$. Then, we have:
\begin{equation}\label{EqW1}
\frac{1}{\wp-e_1}-\frac{1}{\wp+e_1}=\frac{1}{2e_1^2}\left(\wp(z-1/2)-\wp(z-i/2)-2e_1\right),
\end{equation}

\begin{equation}\label{EqW2}
\frac{\wp}{\wp-e_1}-\frac{\wp}{\wp+e_1}=\frac{1}{2e_1}\left(\wp(z-1/2)+\wp(z-i/2)\right),
\end{equation}

\begin{equation}\label{EqW3}
\frac{\wp^2}{\wp-e_1}-\frac{\wp^2}{\wp+e_1}=e_1+\frac{1}{2}\left(\wp(z-1/2)-\wp(z-i/2)\right), 
\end{equation}

\begin{equation}\label{EqW4}
\frac{\wp^3}{\wp-e_1}-\frac{\wp^3}{\wp+e_1}=2e_1\wp+\frac{e_1}{2}\left(\wp(z-1/2)+\wp(z-i/2)\right),  
\end{equation}

\begin{equation}\label{EqW5}
\frac{\wp^4}{\wp-e_1}-\frac{\wp^4}{\wp+e_1}=2e_1\wp^2+e_1^3+\frac{e_1^2}{2}\left(\wp(z-1/2)-\wp(z-i/2)\right), 
\end{equation}

\begin{equation}\label{EqW6}
\frac{\wp' \wp}{\wp-e_1}-\frac{\wp' \wp}{\wp+e_1}=e_1\frac{\wp'}{\wp-e_1}+e_1\frac{\wp'}{\wp+e_1},
 \end{equation}
\begin{equation}\label{EqW7}
\frac{\wp' \wp^2}{\wp-e_1}-\frac{\wp' \wp^2}{\wp+e_1}=2e_1\wp'+e_1^2\frac{\wp'}{\wp-e_1}-e_1^2\frac{\wp'}{\wp+e_1}.
\end{equation}


\end{lemma}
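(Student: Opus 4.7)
The plan is to prove all seven identities as algebraic consequences of Lemma 2 combined with elementary polynomial division in the variable $\wp$. The rational functions on the left of (EqW1)--(EqW7) all have the form $\tfrac{P(\wp)}{\wp-e_1} - \tfrac{P(\wp)}{\wp+e_1}$ (possibly multiplied by $\wp'$), and in each case I would reduce to a combination of a polynomial in $\wp$ plus the basic partial fractions $\tfrac{1}{\wp-e_1}$ and $\tfrac{1}{\wp+e_1}$, whose values in terms of $\wp(z-1/2)$ and $\wp(z-i/2)$ are provided by (\ref{pe1}) and (\ref{pe3}).

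Concretely, (EqW1) follows by subtracting (\ref{pe3}) from (\ref{pe1}) directly. For (EqW2) I would use the rewrite $\tfrac{\wp}{\wp-e_1}=1+\tfrac{e_1}{\wp-e_1}$ and $\tfrac{\wp}{\wp+e_1}=1-\tfrac{e_1}{\wp+e_1}$, so the difference equals $e_1\bigl(\tfrac{1}{\wp-e_1}+\tfrac{1}{\wp+e_1}\bigr)$; then adding (\ref{pe1}) and (\ref{pe3}) kills the constant terms and yields the claimed $\tfrac{1}{2e_1}(\wp(z-1/2)+\wp(z-i/2))$. For (EqW3), (EqW4), (EqW5) I would perform the polynomial division
\[
\frac{\wp^k}{\wp\mp e_1} \;=\; Q_k^{\pm}(\wp) \;+\; \frac{(\pm e_1)^k}{\wp\mp e_1},\qquad k=2,3,4,
\]
where $Q_k^{\pm}$ is the quotient polynomial; the polynomial parts combine to give the $\wp$-dependent terms on the right of the identities, and the remaining $\tfrac{(\pm e_1)^k}{\wp\mp e_1}$ pieces are handled exactly as in (EqW1)/(EqW2) via Lemma 2. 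Concretely, (EqW3) uses the odd-$k$-like pattern giving $\tfrac{1}{\wp-e_1}-\tfrac{1}{\wp+e_1}$, (EqW4) the even-$k$-like pattern giving $\tfrac{1}{\wp-e_1}+\tfrac{1}{\wp+e_1}$, and (EqW5) again the odd pattern.

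For (EqW6) and (EqW7) the factor $\wp'$ is simply pulled out in front, and one applies the polynomial reductions already obtained for $\tfrac{\wp}{\wp\pm e_1}$ and $\tfrac{\wp^2}{\wp\pm e_1}$, this time keeping the results in the unreduced form $\tfrac{\wp'}{\wp-e_1}\pm \tfrac{\wp'}{\wp+e_1}$ rather than substituting Lemma 2. This is why (EqW6) and (EqW7) keep $\tfrac{\wp'}{\wp\pm e_1}$ on the right-hand side: no appeal to (\ref{pe1})/(\ref{pe3}) is needed or desired there.

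There is no real obstacle here: the arguments are purely algebraic and each identity takes two or three lines of bookkeeping. The only minor pitfall is keeping track of signs when comparing the ``$-$'' partial fraction in (EqW1) with the ``$+$'' partial fraction that appears in (EqW2) and (EqW4); this is handled by consistently using (\ref{pe1}) and (\ref{pe3}) exactly as stated in Lemma~\ref{lemma2}.
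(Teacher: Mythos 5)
Your proposal is correct and follows essentially the same route as the paper: polynomial division of $\wp^k/(\wp\mp e_1)$ into a quotient plus $(\pm e_1)^k/(\wp\mp e_1)$, then substitution of the partial-fraction identities \eqref{pe1} and \eqref{pe3} from Lemma~\ref{lemma2} (the paper carries this out explicitly for \eqref{EqW1} and \eqref{EqW3} and leaves the rest as ``analogous''). Your observation that \eqref{EqW6} and \eqref{EqW7} are obtained by factoring out $\wp'$ and stopping before the final substitution also matches what the paper does.
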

\begin{proof}
From \eqref{pe1} and \eqref{pe3}, it follows  \eqref{EqW1}. To prove item \eqref{EqW3}, observe that
\begin{equation}\label{pw1}
\frac{\wp^2}{\wp-e_1}=\wp+e_1+\frac{e_1^2}{\wp-e_1},
 \end{equation}
\begin{equation*}
\frac{\wp^2}{\wp+e_1}=\wp-e_1+\frac{e_1^2}{\wp+e_1}.
\end{equation*}
From this it follows that
\begin{equation*}
\frac{\wp^2}{\wp-e_1}-\frac{\wp^2}{\wp+e_1}=2e_1+e_1^2\left(\frac{1}{\wp-e_1}-\frac{1}{\wp+e_1}\right). 
\end{equation*}
Using \eqref{EqW1}, the result follows. In an analogous way,  we can prove the other formulae.
\end{proof}

Let $\alpha, \beta \!: [0,1] \rightarrow \mathbb{C}$ be the paths
\begin{equation}\label{basisH}
\alpha(t) =\frac{i}{3}+t, \ \beta(t)=\frac{1}{3}+it.
\end{equation}
The set $\{\alpha, \beta\}$ is a non-trivial, homology basis of the torus $\displaystyle{T^2=\mathbb{C}/\mathcal{L}[1,i]}.$

\begin{lemma}\label{lemma4}
If $\mathcal{L} = [1,i]$, \ $z, z_0 \in F,$ here $z_0$ is a fixed point of $F$, then

\begin{equation}
	\int_{z_0}^z \frac{1}{\wp-e_1} \, dz=-\frac{1}{2e_1^2} \zeta(z-1/2)-\frac{1}{2e_1}z+c_1,
\end{equation}
	
\begin{equation}
	\int_{z_0}^z \frac{\wp}{\wp-e_1} \, dz=\frac{z}{2} - \frac{1}{2e_1}\zeta(z-1/2)+c_2,
\end{equation}

\begin{equation}
\int_{z_0}^z \frac{\wp^2}{\wp-e_1} \, dz=\frac{e_1}{2}z-\zeta(z)-\frac{1}{2}\zeta(z-1/2)+c_3, 
\end{equation}

\begin{equation}\label{intp3}
\int_{z_0}^z \frac{\wp^3}{\wp-e_1} \, dz=\frac{5e_1^2}{6}z+\frac{1}{6}\wp'-e_1\zeta(z)-\frac{e_1}{2}\zeta(z-1/2)+c_4. 
\end{equation}
where $c_j, \ j=1,2,3,4,5$ are constants.
\end{lemma}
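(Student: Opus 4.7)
The plan is to derive all four integrals from the single identity \eqref{pe1}, combined with the relation $\zeta'(z)=-\wp(z)$ from \eqref{ZP} and the antiderivative of $\wp^{2}$ coming from \eqref{EP2}.

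First I would handle formula (1) directly: by Lemma \ref{lemma2}, equation \eqref{pe1},
$$\frac{1}{\wp-e_1}=\frac{1}{2e_1^{2}}\,\wp(z-1/2)-\frac{1}{2e_1},$$
and since $\zeta'(z-1/2)=-\wp(z-1/2)$, integrating term by term produces
$$\int_{z_0}^{z}\frac{dz}{\wp-e_1}=-\frac{1}{2e_1^{2}}\zeta(z-1/2)-\frac{1}{2e_1}z+c_1,$$
which is formula (1).

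Next, the remaining three integrals reduce to formula (1) by polynomial long division of $\wp^{k}$ by $\wp-e_1$. Writing
$$\frac{\wp}{\wp-e_1}=1+\frac{e_1}{\wp-e_1},\qquad \frac{\wp^{2}}{\wp-e_1}=\wp+e_1+\frac{e_1^{2}}{\wp-e_1}$$
(the latter being exactly \eqref{pw1}), and
$$\frac{\wp^{3}}{\wp-e_1}=\wp^{2}+e_1\wp+e_1^{2}+\frac{e_1^{3}}{\wp-e_1},$$
I obtain formulas (2), (3), (4) once I integrate $1$, $\wp$, $\wp^{2}$ and use (1) for the $1/(\wp-e_1)$ piece. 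Integrating $1$ gives $z$; integrating $\wp$ gives $-\zeta(z)$ by \eqref{ZP}; and integrating $\wp^{2}$ uses \eqref{EP2}:
$$\int_{z_0}^{z}\wp^{2}\,dz=\int_{z_0}^{z}\Bigl(\frac{\wp''}{6}+\frac{e_1^{2}}{3}\Bigr)dz=\frac{\wp'(z)}{6}+\frac{e_1^{2}}{3}z+\text{const}.$$
This is the key ingredient for (4) and is the only step that is not an immediate consequence of $\zeta'=-\wp$.

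The final step is bookkeeping: assembling the linear combinations and checking that the coefficients of $z$ match. For instance, in (4) the coefficient of $z$ is $\tfrac{e_1^{2}}{3}+e_1^{2}-\tfrac{e_1^{2}}{2}=\tfrac{5e_1^{2}}{6}$, as claimed. I do not foresee a genuine obstacle here: everything hinges on the single partial-fraction identity \eqref{pe1} already proved in Lemma \ref{lemma2}, plus the antiderivative of $\wp^{2}$ from \eqref{EP2}. The constants $c_1,\dots,c_4$ absorb the contributions from the fixed basepoint $z_0$.
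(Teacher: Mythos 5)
Your proposal is correct and follows essentially the same route as the paper: the paper also reduces $\wp^{k}/(\wp-e_1)$ by division (via \eqref{pw1}), substitutes \eqref{pe1} and \eqref{EP2}, and integrates using $\zeta'=-\wp$, writing out only the case \eqref{intp3} and leaving the others as analogous. Your coefficient check $\tfrac{e_1^{2}}{3}+e_1^{2}-\tfrac{e_1^{2}}{2}=\tfrac{5e_1^{2}}{6}$ agrees with the stated formula.
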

\begin{proof}
To prove item \eqref{intp3}, observe that
\begin{equation*}
\frac{\wp^3}{\wp-e_1}=\wp^2+e_1 \, \frac{\wp^2}{\wp-e_1},
 \end{equation*}
Using \eqref{EP2}, \eqref{pe1} and \eqref{pw1}, we have
\begin{equation*}
\frac{\wp^3}{\wp-e_1}=\frac{5}{6}e_1^2+\frac{1}{6}\wp''+e_1 \wp +\frac{e_1}{2}\wp(z-1/2).
 \end{equation*}
Hence
\begin{equation*}
\int_{z_0}^{z}\frac{\wp^3}{\wp-e_1}=\frac{5}{6}e_1^2z+\frac{1}{6}\wp'-e_1 \zeta(z) -\frac{e_1}{2}\zeta(z-1/2)+c_4.
\end{equation*}
In an analogous way,  we can prove the other cases.
\end{proof}

\begin{lemma}\label{lemma5}
If $\mathcal{L} = [1,i]$, \ $z, z_0 \in F,$ here $z_0$ is a fixed point of $F$, then

\begin{equation}
	\int_{z_0}^z \left(\frac{1}{\wp-e_1}-\frac{1}{\wp+e_1}\right) \, dz=\frac{1}{2e_1^2}\Big(-\zeta(z-1/2)+\zeta(z-i/2)-2e_1z\Big)+c_1,
\end{equation}
	
\begin{equation}
	\int_{z_0}^z \left(\frac{\wp}{\wp-e_1}-\frac{\wp}{\wp+e_1}\right) \, dz=-\frac{1}{2e_1}\zeta(z-1/2)-\frac{1}{2e_1}\zeta(z-i/2)+c_2,
\end{equation}

\begin{equation}
\int_{z_0}^z \left(\frac{\wp^2}{\wp-e_1}-\frac{\wp^2}{\wp+e_1}\right) \, dz=e_1z-\frac{1}{2}\zeta(z-1/2)+\frac{1}{2}\zeta(z-i/2)+c_3, 
\end{equation}

\begin{equation}
\int_{z_0}^z \left(\frac{\wp^3}{\wp-e_1}-\frac{\wp^3}{\wp+e_1}\right) \, dz=-2e_1\zeta(z)-\frac{e_1}{2}\zeta(z-1/2)-\frac{e_1}{2}\wp(z-i/2)+c_4, 
\end{equation}

\begin{equation}
\int_{z_0}^z \left(\frac{\wp^4}{\wp-e_1}-\frac{\wp^4}{\wp+e_1}\right) \, dz=\frac{e_1}{3}\wp'(z)+\frac{5e_1^3}{3}z-\frac{e_1^2}{2}\zeta(z-1/2)+\frac{e_1^2}{2}\zeta(z-i/2)+c_5,
\end{equation}
where $c_j, \ j=1,2,3,4,5$ are constants.
\end{lemma}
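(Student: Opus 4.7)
The plan is to reduce each integrand via Lemma \ref{lemma3} to a linear combination of easily antidifferentiated pieces, and then integrate term by term. The only analytic tools required are the relation $\zeta'(z)=-\wp(z)$ from \eqref{ZP}, the trivial identity $\int\wp'(z)\,dz=\wp(z)$, and, for the last formula, the quadratic relation \eqref{EP2}, which allows replacing $\wp^2$ by $\tfrac{1}{6}\wp''+\tfrac{e_1^2}{3}$ and hence integrating it to $\tfrac{1}{6}\wp'+\tfrac{e_1^2}{3}\,z$.

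For the first three formulas the reduction is immediate: applying \eqref{EqW1}, \eqref{EqW2}, \eqref{EqW3} respectively, the integrand becomes a linear combination of constants and of the shifted $\wp$-functions $\wp(z-1/2)$ and $\wp(z-i/2)$. Their antiderivatives are, respectively, a term linear in $z$ and $-\zeta(z-1/2)$, $-\zeta(z-i/2)$, which reproduce the stated expressions up to the additive constant $c_j$.

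The fourth formula uses \eqref{EqW4}, whose right-hand side now also carries a $2e_1\wp(z)$ term; this contributes $-2e_1\zeta(z)$, and the shifted-$\wp$ pieces contribute $-\tfrac{e_1}{2}\zeta(z-1/2)-\tfrac{e_1}{2}\zeta(z-i/2)$ (the last term as stated appears to contain a typographical $\wp$ in place of $\zeta$). The fifth formula is the only one that requires real bookkeeping. Identity \eqref{EqW5} gives $2e_1\wp^2+e_1^3+\tfrac{e_1^2}{2}\bigl(\wp(z-1/2)-\wp(z-i/2)\bigr)$, and here the first move is to apply \eqref{EP2} to rewrite $2e_1\wp^2$ as $\tfrac{e_1}{3}\wp''+\tfrac{2e_1^3}{3}$; integrating produces $\tfrac{e_1}{3}\wp'(z)+\tfrac{2e_1^3}{3}\,z$, and this combines with the integral $e_1^3\,z$ coming from the constant $e_1^3$ to yield the coefficient $\tfrac{5e_1^3}{3}$ in front of $z$. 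The shifted-$\wp$ pieces yield the corresponding $\zeta$ terms as before.

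The main subtlety is not analytic but combinatorial: keeping track of all linear-in-$z$ contributions (in particular the $\tfrac{5e_1^3}{3}$ in the fifth identity, which arises from two separate sources) and remembering to pre-process every occurrence of $\wp^2$ through \eqref{EP2} before integrating. No identities beyond Lemmas \ref{lemma1}--\ref{lemma3} are required.
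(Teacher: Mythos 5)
Your proof is correct and follows exactly the route the paper intends (the paper states Lemma \ref{lemma5} without proof, but it is the same term-by-term integration of the identities of Lemma \ref{lemma3} that the paper carries out explicitly for \eqref{intp3} in Lemma \ref{lemma4}, using $\zeta'=-\wp$ and \eqref{EP2} to handle $\wp^2$). You are also right that the $\wp(z-i/2)$ in the fourth displayed formula is a typographical error for $\zeta(z-i/2)$, as confirmed by how that formula is used in the computation of $\int\phi_1$ later in the paper.
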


Using the Legendre's relation we have 

\begin{lemma}(\cite{Costa.1984})\label{lemma6}
Let $\alpha, \beta \!: [0,1] \rightarrow \mathbb{C}$ be the paths
\begin{equation*}
\alpha(t) =\frac{i}{3}+t, \ \beta(t)=\frac{1}{3}+it
\end{equation*}
of homology basis of the torus $\displaystyle{T^2=\mathbb{C}/\mathcal{L}}.$ Then,

\begin{equation}\label{intP}
\int \wp(z) \, dz = - \zeta(z) + const.,
\end{equation}

\begin{equation}
\zeta(1/2)=\frac{\pi}{2}, \ \zeta(i/2)=-\frac{\pi}{2}i, \  \zeta\left(\frac{1+i}{2}\right)=\frac{\pi}{2}-\frac{\pi}{2}i,
\end{equation}

\begin{equation}\label{intA}
\int_{\alpha} \wp(z) \, dz = \int_{\alpha} \wp(z-1/2) \, dz = \int_{\alpha} \wp(z-i/2) \, dz=  \int_{\alpha} \wp(z-(1+i)/2) \, dz = -\pi,
\end{equation}

\begin{equation}\label{intB}
\int_{\beta} \wp(z) \, dz = \int_{\beta} \wp(z-1/2) \, dz = \int_{\beta} \wp(z-i/2) \, dz=  \int_{\beta} \wp(z-(1+i)/2) \, dz = i\pi.
\end{equation}

\end{lemma}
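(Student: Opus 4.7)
The plan is to realize $S$ as $X(M)$ via the Weierstrass representation \eqref{PS} applied to the prescribed data $(g,\eta)$ on $M = T^2 \setminus \{0,1/2,i/2\}$, where $T^2 = \mathbb{C}/\mathcal{L}[1,i]$; this automatically gives a surface of genus one with three ends. First I would verify the structural hypothesis of Proposition~\ref{Prop1}. Using Lemma~\ref{lemma1} and the fact that $\wp'$ has simple zeros at the three nonzero half-periods, a direct check shows that $\eta=2\wp\,dz$ has a double pole at $0$ and a double zero at $(1+i)/2$, while $g=c(\wp^2-9e_1^2)/\wp'$ has simple poles exactly at the four points $0,\,1/2,\,i/2,\,(1+i)/2$ (at $0$ one has $\wp^2/\wp'\sim -1/(2z)$; at each half-period the numerator $\wp^2-9e_1^2 \in \{-9e_1^2,-8e_1^2\}$ is nonzero and the denominator has a simple zero). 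Hence $\deg\overline{g}=4$, so \eqref{CT} gives the total curvature $-16\pi$, and the unique pole of $g$ on $M$ is the simple pole at $(1+i)/2$, which is precisely matched by the double zero of $\eta$ there, verifying the hypothesis of Proposition~\ref{Prop1}.

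Next I would classify the ends and confirm completeness. At $z=0$ the product $\phi_3=g\eta\sim -c\,z^{-3}\,dz$ has a triple pole, the signature of an Enneper-type end. At $z=1/2$ and $z=i/2$ the simple pole of $g$ combines with the nonvanishing values $\eta(1/2)=2e_1\,dz$ and $\eta(i/2)=-2e_1\,dz$ to give $\phi_3$ a simple pole with nonzero residue, the signature of a catenoid-type end. Completeness \eqref{PS0} is then immediate, since $(1+|g|^2)|\eta|$ diverges at every puncture. The symmetries in item~(3) are extracted from the lattice automorphisms $z\mapsto\bar z$ and $z\mapsto iz$: since $\wp(\bar z)=\overline{\wp(z)}$ and $\wp(iz)=-\wp(z)$ (with the corresponding behaviour of $\wp'$), one checks that $g$ and $\eta$ transform so that \eqref{WR}--\eqref{PS} realize $A_\beta$ (reflection in the $x_1x_3$-plane) and $A_\rho$ ($\pi/2$-rotation composed with $x_3$-reflection) on $X(M)$; the group generated is the order-$8$ dihedral group $D_4$.

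The heart of the proof is the period problem \eqref{PP1}. The small-loop periods around the three punctures equal $2\pi i\,\mathrm{Res}_{p_k}(\phi_j)$, and by direct Laurent expansion (using that $c$ and $e_1=\wp(1/2)$ are real) each such residue is purely imaginary, so the real parts vanish automatically. For the homology periods along $\alpha$ and $\beta$ from \eqref{basisH}, I first integrate $\eta=2\wp\,dz$ using \eqref{intA}--\eqref{intB} to get $\int_\alpha\eta=-2\pi,\ \int_\beta\eta=2\pi i$. For $g^2\eta$ I use $(\wp')^2=4\wp(\wp^2-e_1^2)$ to rewrite
\[
g^2\eta \;=\; \frac{c^2(\wp^2-9e_1^2)^2}{2(\wp^2-e_1^2)}\,dz \;=\; \frac{c^2}{2}\bigl(\wp^2-17e_1^2\bigr)dz \;+\; 16\,c^2e_1^3\left(\frac{1}{\wp-e_1}-\frac{1}{\wp+e_1}\right)dz,
\]
where the polynomial division makes use of \eqref{EF}. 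Each term on the right integrates over $\alpha,\beta$ via \eqref{EP2} and Lemma~\ref{lemma5}, and the shifted $\zeta$-increments are supplied by Lemma~\ref{lemma6} (e.g.\ $\zeta(z+1)-\zeta(z)=\pi$). A similar treatment applies to $g\eta$. The real-part condition $\mathrm{Re}\int_\alpha\phi_1=0$ then reduces to the single scalar equation $-\pi + \tfrac{73}{6}\,c^2e_1^2 = 0$, i.e.\ $c^2 e_1^2 = 6\pi/73$, which forces $c = e_1^{-1}\sqrt{6\pi/73}$; the remaining period identities are then guaranteed by the $D_4$-symmetry.

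The main obstacle is precisely this partial-fraction/polynomial-division bookkeeping on $g^2\eta$ (and, less critically, on $g\eta$) and the careful use of Legendre's relation, which must assemble so that the specific numerical constant $73$ appears in the period equation and singles out $c=e_1^{-1}\sqrt{6\pi/73}$ as the unique value closing the periods. All other verifications---pole counting, the symmetry reductions, the end classification, and completeness---are routine given the lemmas already collected in Section~2.
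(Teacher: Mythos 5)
Your proposal does not prove the statement in question. The statement is Lemma~\ref{lemma6}, a purely function-theoretic fact about the Weierstrass functions on the square lattice: that $\int\wp\,dz=-\zeta(z)+\mathrm{const.}$, that $\zeta(1/2)=\pi/2$, $\zeta(i/2)=-\pi i/2$, $\zeta((1+i)/2)=\pi/2-\pi i/2$, and that the periods of $\wp(z)\,dz$ (and its translates) along $\alpha$ and $\beta$ equal $-\pi$ and $i\pi$. What you have written is instead an outline of the proof of Theorem~\ref{Teorema1}: you discuss the degree of the Gauss map, the classification of the ends, completeness, the dihedral symmetry group, and the determination of the constant $c=e_1^{-1}\sqrt{6\pi/73}$ from the period problem. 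None of that establishes any of the four displayed identities of the lemma. Worse, your argument explicitly \emph{invokes} Lemma~\ref{lemma6} (``the shifted $\zeta$-increments are supplied by Lemma~\ref{lemma6}''), so even reinterpreted charitably it is circular as a proof of that lemma.

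A correct proof would run roughly as follows. The identity $\int\wp\,dz=-\zeta(z)+\mathrm{const.}$ is immediate from \eqref{ZP}. For the half-period values, use the quasi-periodicity $\zeta(z+\omega_k)=\zeta(z)+2\zeta(\omega_k/2)$, the homogeneity $\zeta(iz;\mathcal{L}[1,i])=-i\,\zeta(z;\mathcal{L}[1,i])$ (valid because $i\mathcal{L}=\mathcal{L}$ for the square lattice), which gives $\zeta(i/2)=-i\,\zeta(1/2)$, and Legendre's relation $2\zeta(1/2)\cdot i-2\zeta(i/2)\cdot 1=2\pi i$; together these force $\zeta(1/2)=\pi/2$ and $\zeta(i/2)=-\pi i/2$, and then $\zeta((1+i)/2)=\zeta(1/2)+\zeta(i/2)$. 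Finally, $\int_{\alpha}\wp\,dz=-\bigl(\zeta(i/3+1)-\zeta(i/3)\bigr)=-2\zeta(1/2)=-\pi$ and $\int_{\beta}\wp\,dz=-2\zeta(i/2)=i\pi$, and the translated integrands have the same periods since the additive quasi-period of $\zeta$ is unaffected by a shift of argument. You need to supply an argument of this kind; the material in your proposal belongs to the proof of the main theorem, not here.
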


\subsection{Symmetries of the Weierstrass $\wp$ functions}
The symmetries of the minimal surface $S=X(M)$ given in Theorem \ref{Teorema1} are  consequence of the symmetries of the Weierstrass $\wp$ functions no fundamental domain $F$ and of Proposition \ref{Wohlgemuth.1991} below.

\begin{lemma}\label{LemaS1}(\cite{Hoffman.1985})
Let $\wp(z)$ be the Weierstrass $\wp$-function for the unit-square lattices:
\begin{enumerate}
\item $\wp(\rho(w_2+z))=-\wp(w_2+z), \ \ \rho(w_2+z)=w_2+iz,$
\item $\wp(\beta(w_2+z))=\overline{\wp(w_2+z)}, \ \ \beta(w_2+z)=w_2+\overline{z},$
\item $\wp(\rho\circ\beta(w_2+z))=-\overline{\wp(w_2+z)},$
\item $\wp(\rho^2\circ\beta(w_2+z))=\overline{\wp(w_2+z)},$
\item $\wp(\mu(w_2+z))=-\overline{\wp(w_2+z)}, \ \ \mu(w_2+z)=w_2-i\overline{z}. $
\end{enumerate}
\end{lemma}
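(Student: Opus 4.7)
The plan is to reduce all five identities to four elementary symmetries of $\wp$ for the unit-square lattice $\mathcal{L}=[1,i]$: (a) double periodicity $\wp(z+\omega)=\wp(z)$ for every $\omega\in\mathcal{L}$; (b) evenness $\wp(-z)=\wp(z)$; (c) the rotational identity $\wp(iz)=-\wp(z)$, which follows from the invariance $i\mathcal{L}=\mathcal{L}$ combined with the fact that $i^2=-1$ enters quadratically in the defining series \eqref{PW}; and (d) reality $\wp(\bar z)=\overline{\wp(z)}$, coming from $\overline{\mathcal{L}}=\mathcal{L}$ together with real convergence of \eqref{PW}. The other necessary ingredients are the three bookkeeping identities $iw_2=w_2-1$, $\bar{w}_2=w_2-i$, and $2w_2=1+i$, all of which lie in $\mathcal{L}$; these let me ``slide'' the base point $w_2$ past the transformations $z\mapsto iz$ and $z\mapsto\bar z$ at the cost of a purely lattice-valued shift.

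For part (1) the key algebraic step is to rewrite $w_2+iz=i(w_2+z)+(w_2-iw_2)=i(w_2+z)+1$; then (a) and (c) give $\wp(w_2+iz)=\wp(i(w_2+z))=-\wp(w_2+z)$. Part (2) is handled in exactly the same spirit: $w_2+\bar z=\overline{w_2+z}+(w_2-\bar{w}_2)=\overline{w_2+z}+i$, and (a) together with (d) yields $\wp(w_2+\bar z)=\wp(\overline{w_2+z})=\overline{\wp(w_2+z)}$.

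Parts (3)--(5) then drop out by composition. For (3) I apply (1) with $z$ replaced by $\bar z$ and then invoke (2), producing $\wp(\rho\circ\beta(w_2+z))=\wp(w_2+i\bar z)=-\wp(w_2+\bar z)=-\overline{\wp(w_2+z)}$. For (4), a short check shows that $\rho^2$ is the point reflection $u\mapsto 2w_2-u$, so using $2w_2\in\mathcal{L}$ and (b) gives $\wp(\rho^2\circ\beta(w_2+z))=\wp(w_2-\bar z)=\wp(-(w_2+\bar z)+2w_2)=\wp(w_2+\bar z)=\overline{\wp(w_2+z)}$. Finally for (5), the same pair of moves (evenness, then a lattice shift by $2w_2$) converts $\wp(w_2-i\bar z)$ into $\wp(w_2+i\bar z)$, which reduces (5) immediately to (3).

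I do not expect any genuine obstacle here; the only thing requiring real care is to verify in each step that the shift actually lies in $\mathcal{L}=[1,i]$ before invoking periodicity, and to rearrange the argument into the form $i\cdot(\text{something})$ before invoking (c) so that the base point $w_2$ does not interfere with the rotational identity.
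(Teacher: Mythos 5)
Your proof is correct and complete. The paper itself gives no argument for this lemma --- it is simply quoted with a citation to Hoffman--Meeks --- so there is no ``paper proof'' to compare against; your reduction to the four elementary symmetries (periodicity, evenness, $\wp(iz)=-\wp(z)$ from $i\mathcal{L}=\mathcal{L}$, and $\wp(\bar z)=\overline{\wp(z)}$ from $\overline{\mathcal{L}}=\mathcal{L}$), together with the lattice bookkeeping $w_2-iw_2=1$, $w_2-\bar w_2=i$, $2w_2=1+i$, is exactly the standard self-contained derivation and all five cases check out.
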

\begin{remark}
We note that $\rho$, $\beta$, $\rho^2 \circ \beta$, $\rho \circ \beta$ and $\mu$ above, are respectively, a rotation by $\pi/2$ about $w_2$, reflection about the horizontal line,  reflection about the vertical line, reflection about the positive diagonal and reflection about the negative diagonal.
\end{remark}

\begin{lemma}(\cite{Karcher.1989})\label{Karcher.1989}
Let $\zeta$ be the curve in $M$ such that $g \circ \zeta$ is either a meridian of $\mathbb{S}^2$ or $g \circ \zeta$ is the equator of $\mathbb{S}^2$ and $(g\eta)(\zeta')$ is real or imaginary. Then
$\zeta$ is a geodesic on $M$.
\end{lemma}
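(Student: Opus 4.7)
\medskip

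\noindent\textbf{Proof proposal.} The plan is to show that the image $X\circ\zeta$ either is a straight-line segment or is a planar curve whose containing plane is a normal plane of the surface $S=X(M)$ along $\zeta$; in either case $X\circ\zeta$ is a geodesic of $S$, and hence $\zeta$ is a geodesic on $M$ for the induced metric.

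\emph{Step 1 (planarity of the image).} After a rigid rotation of the target $\mathbb{R}^3$ I may normalise the great circle so that the meridian hypothesis becomes $g(\zeta)\subset\mathbb{R}$ and the equatorial hypothesis becomes $|g(\zeta)|=1$. Substituting into the Weierstrass forms
\begin{equation*}
\phi_1=\tfrac{1}{2}(1-g^2)\eta,\qquad \phi_2=\tfrac{i}{2}(1+g^2)\eta,\qquad \phi_3=g\eta,
\end{equation*}
and using that $(g\eta)(\zeta')$ is either real or purely imaginary, I get four subcases. A short direct computation shows that in each subcase at least one of $\text{Re}\,\phi_j(\zeta')$ vanishes identically, so at least one coordinate of $X\circ\zeta=\text{Re}\int(\phi_1,\phi_2,\phi_3)$ is constant along $\zeta$. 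Thus the image lies in a coordinate plane (or, in the two degenerate subcases, on a coordinate axis, yielding a straight-line segment, which is already a geodesic).

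\emph{Step 2 (the containing plane is a normal plane of $S$).} The unit normal of the minimal surface is
\begin{equation*}
N=\frac{1}{1+|g|^2}\bigl(2\,\text{Re}(g),\,2\,\text{Im}(g),\,|g|^2-1\bigr).
\end{equation*}
In the meridian subcase with $g\in\mathbb{R}$ one has $N_2\equiv 0$ along $\zeta$, so $N$ is tangent to the plane $\{x_2=\text{const}\}$ produced in Step 1 when $(g\eta)(\zeta')$ is real. In the equatorial subcase one has $N_3\equiv 0$, matching the horizontal plane produced when $(g\eta)(\zeta')$ is purely imaginary. Hence in both non-degenerate subcases the plane containing $X\circ\zeta$ is a normal plane of $S$.

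\emph{Step 3 (conclusion).} A planar curve on a $C^2$ surface whose containing plane is a normal plane along the curve is automatically a geodesic: parametrising by arc length, both the acceleration $\gamma''$ and the surface normal $N$ lie in the plane and are orthogonal to the unit tangent $\gamma'$, so they are parallel and the geodesic curvature vanishes. Applied to $\gamma=X\circ\zeta$, this finishes the proof. The main obstacle is the careful bookkeeping in Step~1: the four subcases (meridian/equator paired with real/imaginary) must each be matched with the right coordinate plane in Step~2, and one must verify that the normalising rotation of $g(\zeta)$ is realised by an actual rigid motion of $\mathbb{R}^3$, so that geodesicity is transported back to the original data.
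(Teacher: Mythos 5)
Your proof is correct and is the standard argument for this criterion (the paper gives no proof of its own, simply citing Karcher): after a rotation about the vertical axis --- which sends $(g,\eta)$ to $(e^{-i\theta}g,\,e^{i\theta}\eta)$ and therefore preserves the hypothesis on $(g\eta)(\zeta')$ --- each of the four subcases yields either a straight line or a planar curve whose plane contains the surface normal, hence a geodesic. Your case bookkeeping does check out: meridian with $(g\eta)(\zeta')$ real and equator with $(g\eta)(\zeta')$ imaginary give planar curves in $\{x_2=\mathrm{const}\}$ and $\{x_3=\mathrm{const}\}$ respectively (where $N_2\equiv 0$, resp.\ $N_3\equiv 0$), while the remaining two subcases give straight-line segments.
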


\begin{proposition}\label{Wohlgemuth.1991}
(\cite{Wohlgemuth.1991}) If the second fundamental form $II = dg \cdot\eta$ is real when applied to
the tangent vector field of a geodesic, that geodesic is a planar curve of symmetry
and if $dg \cdot\eta$ is imaginary, it is a line.
\end{proposition}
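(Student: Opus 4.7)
The plan is to identify the complex quadratic differential $dg\cdot\eta$ with (a constant multiple of) the Hopf differential of the minimal immersion, and then read off the Euclidean Frenet data of the image curve along a given geodesic. Writing $\eta=f(z)\,dz$ in an isothermal coordinate $z$ and denoting by $\phi_j$ the coefficient of $dz$ in the $1$-form of (\ref{WR}), so that $X_z=\tfrac{1}{2}(\phi_1,\phi_2,\phi_3)$ and $\sum\phi_j^2\equiv 0$, a direct expansion of $\sum\phi_j'\nu_j$ with the stereographic Gauss map $\nu=(1+|g|^2)^{-1}(2\,\mathrm{Re}\,g,\,2\,\mathrm{Im}\,g,\,|g|^2-1)$ — in which all $f'$-contributions cancel identically — yields $\sum\phi_j'\nu_j=-g'f$. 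Hence $X_{zz}\cdot\nu=-\tfrac12 g'f$, and for a tangent direction $v\in\mathbb{C}$ at $z$,
\[
II(v,v)=-\mathrm{Re}\bigl(g'(z)f(z)v^2\bigr),\qquad II(v,iv)=\mathrm{Im}\bigl(g'(z)f(z)v^2\bigr),
\]
where $iv$ is the $\pi/2$-rotation of $v$ in the conformal tangent plane.

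Next I would invoke Frenet--Serret for the image $c=X\circ\gamma$ of a geodesic $\gamma\subset M$. Because $\gamma$ is a geodesic of $S$, the Euclidean acceleration of $c$ is purely normal to $S$, so $c''=II(T,T)\,\nu$; consequently the principal normal of $c$ is $\pm\nu$, its Euclidean curvature equals $\kappa=|II(T,T)|$, and its osculating plane is $\mathrm{span}(T,\nu)$. A short computation with the binormal $b=T\times\nu$ and the Weingarten relation $\nu'=-S(T)$ gives $b'=\mp\,T\times S(T)$, so the Euclidean torsion of $c$ vanishes if and only if $S(T)\parallel T$ along $\gamma$, equivalently $II(T,iT)=0$, i.e.\ $T$ is a principal direction of the shape operator.

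The two cases of the proposition now fall out. If $(dg\cdot\eta)(v)=g'(z)f(z)v^2$ is real along $\gamma$, the second formula above gives $II(T,iT)\equiv 0$; by the previous paragraph the torsion of $c$ vanishes, so $c$ lies in some plane $\Pi$, and since $\nu$ lies in the osculating plane along $c$, the surface $S$ meets $\Pi$ orthogonally. Schwarz's reflection principle for minimal surfaces — applicable because the Weierstrass data are real-analytic in a neighborhood of $\gamma$ and the orthogonality condition translates to the relevant Schwarz-type conjugation symmetry for $(g,\eta)$ — then extends $S$ across $c$ by reflection in $\Pi$, making $\Pi$ a plane of symmetry and $c$ a planar curve of symmetry. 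If $(dg\cdot\eta)(v)$ is purely imaginary, the first formula gives $II(T,T)\equiv 0$, hence $\kappa\equiv 0$ and $c$ is a straight segment of $\mathbb{R}^3$; Schwarz's reflection in straight-line form further produces the $\pi$-rotation about this line as a symmetry of $S$. The main obstacle is the algebraic identity $\sum\phi_j'\nu_j=-g'f$ — a cancellation that effectively encodes the minimality hypothesis — together with the careful invocation of Schwarz reflection needed to upgrade planarity (respectively linearity) of $c$ to a genuine surface symmetry.
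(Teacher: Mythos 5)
The paper offers no proof of this proposition at all: it is quoted verbatim from Wohlgemuth's work (\cite{Wohlgemuth.1991}) and used as a black box in the symmetry discussion of Section 4, so there is nothing in the source to compare your argument against line by line. Your proof is the standard self-contained derivation and it is correct. The key identity checks out: writing $\eta=f\,dz$ and expanding $\sum\phi_j'\nu_j$ with $\nu=(1+|g|^2)^{-1}(g+\bar g,\,-i(g-\bar g),\,|g|^2-1)$, the $f'$-terms sum to $gf'(1-|g|^2)+gf'(|g|^2-1)=0$ and the $g'f$-terms sum to $-(1+|g|^2)g'f$, giving $X_{zz}\cdot\nu=-\tfrac12 g'f$ and hence $II(v,v)=-\operatorname{Re}(g'fv^2)$, $II(v,iv)=\operatorname{Im}(g'fv^2)$ as you claim (using $X_{z\bar z}=0$, i.e.\ minimality). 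The Frenet argument correctly reduces the two cases to $II(T,iT)\equiv 0$ (principal geodesic, zero torsion, planar) and $II(T,T)\equiv 0$ (asymptotic geodesic, zero ambient acceleration, straight line). Two small points deserve an explicit sentence: (i) ``torsion vanishes wherever the curvature is nonzero implies planar'' is not true for general $C^\infty$ curves, but is fine here because minimal immersions and their geodesics are real-analytic, so either $\kappa\equiv 0$ on the arc or the zeros of $\kappa$ are isolated and the osculating plane extends analytically; (ii) Schwarz reflection by itself only produces a minimal continuation of $S$ across the curve that is congruent to $S$ under the reflection (resp.\ the $\pi$-rotation); to conclude that this isometry maps the \emph{given} global surface to itself one should appeal to uniqueness of analytic continuation together with completeness, which is how the statement is used in the paper. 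Neither point is a gap in substance, and your route is exactly the one the cited reference takes.
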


\noindent {\bf The Schwarz Reflection Principle for Minimal Surfaces} (for more details see \cite{Fujimoto.2013}, \cite{Karcher.1989}) 

{\it If a minimal surface contains a line segment $L$, then it is symmetric under
rotation by $\pi$ about $L$.}

{\it If a nonplanar minimal surface contains a principal geodesic - necessarily
a planar curve - then it is symmetric under reflection in the plane of that
curve.}
\section{Explicit solution to period problem for Weber's minimal surface}

The Professor Matthias Weber \cite{Weber.2015}(see also \cite{Fujimori.2016}) has constructed numerically an example of a minimal surface of genus one, with one catenoid-type end and one Enneper-type end. In the Proposition \ref{S_VW} below, using the theory of elliptic functions, we will give a description of the Matthias Weber's minimal surface and give an explicit proof of the solution of the period problem.

\begin{proposition}\label{S_VW}
There exists a complete minimal immersion $S$ in $\mathbb{R}^3$, of genus one, with two ends and the following properties:
\begin{enumerate}
	\item The total curvature of $S$ is $-12 \pi$;
	\item $S$ has one catenoid-type end and one Enneper-type end;
\end{enumerate}
\end{proposition}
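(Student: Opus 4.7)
The plan is to apply the Weierstrass representation (Proposition \ref{Prop1}) on the twice-punctured torus $M = T^2 \setminus \{p_1, p_2\}$, where $T^2 = \mathbb{C}/\mathcal{L}[1,i]$, using Weierstrass data $(g, \eta)$ built from the $\wp$ and $\wp'$ of this lattice. Formula \eqref{CT} forces $\deg \overline{g} = 3$, so the ansatz will have the form $g = c\, R(\wp,\wp')$ for a rational function $R$ of degree three together with $\eta$ a $\wp$-multiple of $dz$, leaving a single positive parameter $c$ to be tuned. I would place the Enneper-type puncture $p_1$ at the pole of $\wp$, and locate the catenoid puncture $p_2$ at a second point selected so that $g$ has a simple pole/zero there while the zero/pole compatibility between $g$ and $\eta$ required by Proposition \ref{Prop1} holds.

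The first step is to read off the end-behavior from local Laurent expansions of $\wp$ and $\wp'$: the prescribed catenoid end at $p_2$ amounts to $g\eta = \phi_3$ having a simple pole with nonzero real residue there, together with $g$ of order $\pm 1$, which fixes both the location of $p_2$ and the leading numerical constants; the Enneper behavior at $p_1$ is inherited directly from the triple pole of $\wp'$ at the origin. Completeness is then automatic: the only divergent directions on $M$ exit through the two punctures, where $(1+|g|^2)|\eta|$ blows up by construction, so \eqref{PS0} is immediate.

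The heart of the argument is the period problem \eqref{PP1} along the homology basis $\{\alpha,\beta\}$ of \eqref{basisH}. I would exploit the symmetries of $\wp$ collected in Lemma \ref{LemaS1}: the order-four rotation and the reflections listed there descend to conformal automorphisms of $M$ that, via Proposition \ref{Wohlgemuth.1991} and the Schwarz reflection principle, produce planar geodesics and straight-line symmetries on $S$ and, at the algebraic level, force the automatic vanishing of the real parts of several of the $\int_\gamma \phi_j$. Done carefully, this should reduce the six real period conditions (three on $\alpha$ and three on $\beta$) to a single nontrivial equation in the parameter $c$.

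To solve that residual equation explicitly I would decompose the integrands $\phi_1, \phi_2, \phi_3$ into the building blocks $\wp^k/(\wp \pm e_1)$ treated in Lemmas \ref{lemma2} and \ref{lemma3}, antidifferentiate using the closed-form primitives of Lemmas \ref{lemma4} and \ref{lemma5}, and then evaluate along $\alpha$ and $\beta$ via the Legendre-type identities of Lemma \ref{lemma6}. The main obstacle is bookkeeping: after the $\zeta$-terms telescope, the surviving condition must reduce to an algebraic relation of the shape $A = B\,c^2$ with $A, B > 0$, so that a unique positive $c$ closes the surface. Once this value of $c$ is pinned down, Proposition \ref{Prop1} produces the desired complete minimal immersion $S$ with the stated topological and geometric data.
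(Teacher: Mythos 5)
There is a genuine gap at the heart of your plan: the parameter count. You fix the rational function $R(\wp,\wp')$ in advance and leave only the scale $c$ free, asserting that the symmetries of Lemma \ref{LemaS1} reduce the period problem to ``a single nontrivial equation in the parameter $c$.'' For this surface that reduction is not available. The punctures are $p_1=\pi(1/2)$ (catenoid) and $p_2=\pi(0)$ (Enneper); the order-four rotation $\rho$ about $w_2$ sends $\pi(1/2)$ to $\pi(i/2)$, which is not a puncture, so $\rho$ is \emph{not} an automorphism of $M$ and cannot be used to identify the $\alpha$- and $\beta$-periods. Only the reflections survive, and they give you $\operatorname{Re}\int_\beta\phi_1=\operatorname{Re}\int_\alpha\phi_2=0$ for free (as in the paper, via Lemma \ref{lemma6}), but they leave \emph{two} independent real conditions, $\operatorname{Re}\int_\alpha\phi_1=0$ and $\operatorname{Re}\int_\beta\phi_2=0$. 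With a single scaling parameter these take the form $\pi=B_1c^2$ and $\pi=B_2c^2$ with $B_1\neq B_2$ in general, which is inconsistent: you can close one period by choosing $c$ but not both.

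The missing idea --- and the actual content of the paper's proof --- is to build a second free parameter into the Gauss map: the paper takes $g=c\,(\wp+e_1)(\wp-\lambda)/\wp'$ with an adjustable zero $\lambda$, expands $\phi_1,\phi_2,\phi_3$ in the blocks $\wp^k/(\wp-e_1)$, and computes both surviving periods explicitly. Subtracting the two conditions eliminates $c$ and yields the quadratic $\lambda^2-4e_1\lambda+3e_1^2=0$, whose admissible root $\lambda=3e_1$ (the root $\lambda=e_1$ degenerates to Chen--Gackstatter) then determines $c=\frac{1}{e_1}\sqrt{6\pi/7}$. Your remaining steps (end behavior from Laurent expansions, vanishing of residues of $\phi_1,\phi_2$, real residue of $\phi_3$ at the catenoid puncture, completeness via \eqref{PS0}, and the use of Lemmas \ref{lemma2}--\ref{lemma6} for the evaluation) match the paper and are fine, but without the extra parameter the period problem as you have set it up would not close.
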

\begin{proof}
Let $T^2=\mathbb{C}/\mathcal{L}$ be the o torus with complex structure induce by canonical projection $\pi: \mathcal{C} \rightarrow T^2.$ 
Let $M=T^2 -\{p_1, p_2\}$, where
\begin{equation}
p_1=\pi(1/2),\ p_2=\pi(0).
\end{equation}
The Weierstrass data  $(g, \eta)$ is given by

\begin{equation}\label{WR-WW}
\left\{
\begin{array}{ll}
	 g &= c \, \displaystyle{\frac{(\wp + e_1)(\wp-\lambda)}{\wp'}},  \ \ c >0, \ \lambda> 0, \ \lambda \neq e_1,\\
	\mathbf \eta&=2\wp \, dz.
	\end{array}
	\right.
\end{equation}

The Figure \ref{Figura1} below,  shows the zeros and poles of $g$, $\eta$ and $\phi_3$ on $F.$
\begin{figure}[h]
\subfigure[Zeros and polos of $g$]{
\centering
\includegraphics[totalheight=3cm]{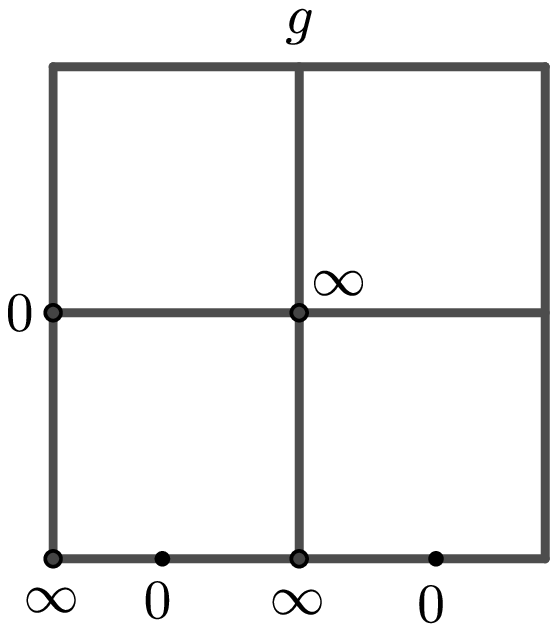}}
\subfigure[Zeros and polos of $\eta$]{
\includegraphics[totalheight=3cm]{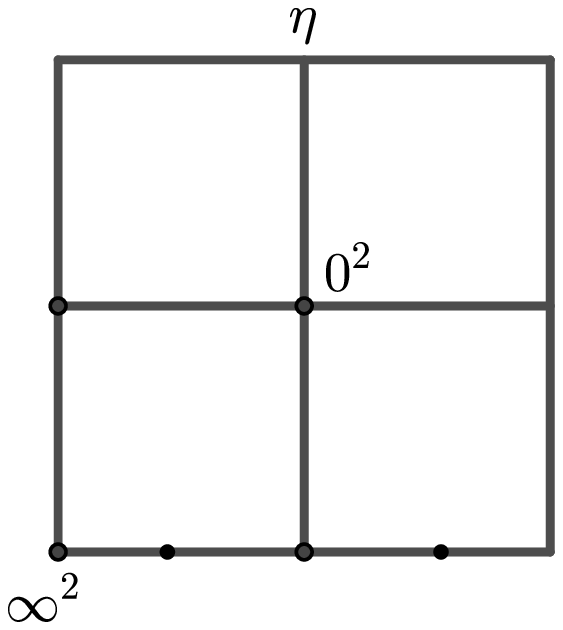}}
\subfigure[Zeros and polos of $\phi_3$]{
\includegraphics[totalheight=3cm]{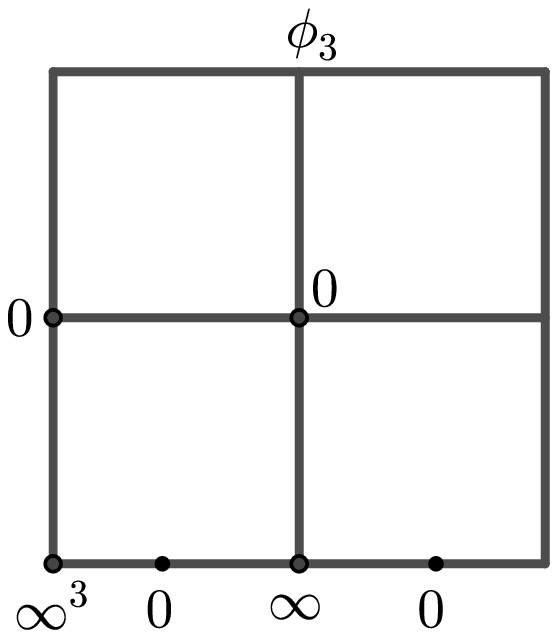}}
\caption{}
\label{Figura1}
\end{figure}

The degree of the Gauss map $g$ equals $3$. Thus, the total curvature

\begin{equation}\label{CT2}
C_T(S):=\int_M K \, dA = -4 \pi \cdot \text{degree}(g) =-12\pi.
\end{equation}
From \eqref{WR}, \eqref{EDPW} and \eqref{WR-WW} we have
\begin{eqnarray*}
\phi_1&=&\wp \, dz - \frac{c^2}{4}\frac{(\wp+e_1)(\wp - \lambda)^2}{\wp-e_1}\,dz,\\
      &=&\wp \, dz - \frac{c^2}{4}\left(\frac{\wp^3}{\wp -e_1}+(e_1-2\lambda)\frac{\wp^2}{\wp-e_1}+(\lambda^2-2e_1\lambda)\frac{\wp}{\wp-e_1}+
			\frac{e_1\lambda^2}{\wp-e_1}\right) \,dz,\\ 
\phi_2&=&i\, \wp \, dz + i\, \frac{c^2}{4}\frac{(\wp+e_1)(\wp - \lambda)^2}{\wp-e_1}\,dz,\\
      &=&i  \wp \, dz + i  \frac{c^2}{4}\left(\frac{\wp^3}{\wp -e_1}+(e_1-2\lambda)\frac{\wp^2}{\wp-e_1}+(\lambda^2-2e_1\lambda)\frac{\wp}{\wp-e_1}+
			\frac{e_1\lambda^2}{\wp-e_1}\right) \,dz,\\ 
\phi_3&=&\frac{c}{2}\wp' \left( \frac{\wp -\lambda}{\wp-e_1}\right)\, dz,\\
      &=&\frac{c}{2} \left( \wp' + (e_1-\lambda)\frac{\wp'}{\wp-e_1}\right)\, dz.
\end{eqnarray*}
Using the Lemma \ref{lemma4} we have
\begin{equation*}
\int \phi_1=-\zeta(z)-\frac{c^2}{4}\left[\frac{\wp'}{6}+\left(\frac{4}{3}e_1^2 - 2e_1\lambda\right)z+(2\lambda-2e_1)\zeta(z)+\left(2\lambda-e_1-\frac{\lambda^2}{e_1} \right)\zeta(z-1/2) \right],\\
\end{equation*}
\begin{equation*}
\int \phi_2=-i\zeta(z)+i\frac{c^2}{4}\left[\frac{\wp'}{6}+\left(\frac{4}{3}e_1^2 - 2e_1\lambda\right)z+(2\lambda-2e_1)\zeta(z)+\left(2\lambda-e_1-\frac{\lambda^2}{e_1} \right)\zeta(z-1/2) \right],
\end{equation*}
\begin{equation*}
\int \phi_3=\frac{c}{2}\Big[\wp + (e_1 - \lambda) \ln |\wp - e_1| \Big].
\end{equation*}
Since $\displaystyle{\int\phi_3}$ is periodic, then 

$$\int_{\alpha} \phi_3=\int_{\beta} \phi_3 =0.$$

From Lemma \ref{lemma6}, it follows that
\begin{equation*}
\text{Re} \int_{\beta} \phi_1= Re\int_{\alpha} \phi_2 =0
\end{equation*}
and
\begin{equation*}
\int_{\alpha} \phi_1= -\pi - \frac{c^2}{4}\left[0+\left(\frac{4}{3}e_1^2 - 2e_1\lambda\right)\cdot 1+(2\lambda-2e_1)\pi+\left(2\lambda-e_1-\frac{\lambda^2}{e_1} \right)\pi \right]
\end{equation*}

\begin{equation}\label{P1}
\int_{\alpha} \phi_1= -\pi - \frac{c^2}{4}\left[\left(\frac{4}{3}e_1^2 - 3e_1\pi\right)+(4\pi-2e_1)\lambda-\frac{\pi}{e_1}\lambda^2 \right],
\end{equation}

\begin{equation*}
\int_{\beta} \phi_2= -i(-i \pi)+i\frac{c^2}{4}\left[0+\left(\frac{4}{3}e_1^2 - 2e_1\lambda\right)\cdot i+(2\lambda-2e_1)(-i \pi)+\left(2\lambda-e_1-\frac{\lambda^2}{e_1} \right)(-i\pi) \right],
\end{equation*}

\begin{equation*}
\int_{\beta} \phi_2= -\pi-\frac{c^2}{4}\left[\left(\frac{4}{3}e_1^2 - 2e_1\lambda\right)+(2\lambda-2e_1)(-\pi)+\left(2\lambda-e_1-\frac{\lambda^2}{e_1} \right)(-\pi) \right],
\end{equation*}

\begin{equation}\label{P2}
\int_{\beta} \phi_2= -\pi-\frac{c^2}{4}\left[\left(\frac{4}{3}e_1^2 + 3e_1\pi\right)-(4\pi+2e_1)\lambda+\frac{\pi}{e_1}\lambda^2 \right].
\end{equation}
Therefore \eqref{PP1} holds if, and only if
\begin{equation*}
\ -\pi - \frac{c^2}{4}\left[\left(\frac{4}{3}e_1^2 - 3e_1\pi\right)+(4\pi-2e_1)\lambda-\frac{\pi}{e_1}\lambda^2 \right]=0,
\end{equation*}
\begin{equation*}
 -\pi-\frac{c^2}{4}\left[\left(\frac{4}{3}e_1^2 + 3e_1\pi\right)-(4\pi+2e_1)\lambda+\frac{\pi}{e_1}\lambda^2 \right]=0.
\end{equation*}
This implies that
\begin{equation*}
\left(\frac{4}{3}e_1^2 + 3e_1\pi\right)-(4\pi+2e_1)\lambda+\frac{\pi}{e_1}\lambda^2=
\left(\frac{4}{3}e_1^2 - 3e_1\pi\right)+(4\pi-2e_1)\lambda-\frac{\pi}{e_1}\lambda^2.
\end{equation*}
This is equivalent to
\begin{equation}\label{Eq_Sol_P1}
\lambda^2-4e_1\lambda+3e_1^2=0.
\end{equation}
The roots of equation \eqref{Eq_Sol_P1} are $\lambda=3e_1$ and $\lambda =e_1.$ By hypothesis, $\lambda \neq e_1$ and thus we obtain

\begin{equation}\label{DW-WW}
\left\{
\begin{array}{ll}
	 g &= c \, \displaystyle{\frac{(\wp + e_1)(\wp-3e_1)}{\wp'}},  \ \ c = \frac{1}{e_1}\sqrt{\frac{6\pi}{7}}.\\
	\mathbf \eta&=2\wp \, dz.
	\end{array}
	\right.
	\end{equation}
	
Now, we will show that M has no real period around $p_1=\pi(1/2)$ and $p_2=\pi(0)$. To do this we will compute residues at the points $p_1$ and $p_2.$ The functions $\phi_1$ and $\phi_2$ have poles of order two at $p_1, \ p_2$  and have no residues, while the function $\phi_3$ has a simple pole at $p_1$. By evaluating the residues of the elliptic function $\phi_3$ at $p_1$, we have
\begin{equation*}
\text{Res}_{p_1} \phi_3= -\frac{c}{2e_1}\wp''(1/2)=-2c\, e_1 \in \mathbb{R}.
\end{equation*}
As the sum of all the residues of an elliptic function at the poles inside $F$ is zero, then $\text{Res}_{p_2} \phi_3$ is also real.

At the points $p_1$ and $p_2$,  $ds=\frac{1}{2}(1+|g|^2)|\eta|$ has a pole of order at least $2$. Thus,
\begin{equation}
\int_{\ell} (1+|g|^2)|\eta|=\infty,
\end{equation}
for any divergent curve $\ell$, around $p_1$ and $p_2$. By Proposition \ref{Prop1}, we obtain a complete minimal surface $S=X(M)$
with genus one and two ends (Enneper - Catenoid), where
\begin{align*}
X_1&=\text{Re}\int_{\frac{1+i}{2}}^z\phi_1=\text{Re}\left\{-\zeta(z)-\frac{c^2}{4}\left[\frac{\wp'(z)}{6}-\frac{14}{3}e_1^2\,z+4e_1\zeta(z)-4e_1\zeta(z-1/2) \right]\right\}+\frac{3\pi^2}{7e_1},\\
X_2&=\text{Re}\int_{\frac{1+i}{2}}^z \phi_2=\text{Re}\left\{-i\zeta(z)+i\frac{c^2}{4}\left[\frac{\wp'(z)}{6}-\frac{14}{3}e_1^2\,z+4e_1\zeta(z)-4e_1\zeta(z-1/2) \right]\right\},\\
X_3&=\text{Re}\int_{\frac{1+i}{2}}^z \phi_3=\text{Re}\left\{\frac{c}{2}\Big[\wp -2e_1 \, \ln |\wp - e_1| \Big]\right\}+\sqrt{\frac{6\pi}{7}}\ln e_1, \ \ c = \frac{1}{e_1}\sqrt{\frac{6\pi}{7}}.
\end{align*}
This completes the proof.
\end{proof}

\begin{remark}
If $\lambda=e_1,$ then 
\begin{equation*}
\left\{
\begin{array}{ll}
	 g &= c \, \displaystyle{\frac{(\wp + e_1)(\wp-e_1)}{\wp'}}=\frac{c}{4}\frac{\wp'}{\wp},  \ \ \frac{c}{4} =\frac{1}{2e_1}\sqrt{\frac{3\pi}{2}},\\
	\mathbf \eta&=2\wp \, dz.
	\end{array}
	\right.
	\end{equation*}
But, this is the Weierstrass data of Chen-Gackstatter surface.
\end{remark}

\section{Proof of the Theorem}

In order to find a new example of a minimal surface with three ends, being two catenoid-type ends and one  Ennerper-type end,  we will add one more catenoid-type end to minimal surface described in Proposition \ref{S_VW}, by symmetry.

Let $T^2=\mathbb{C}/\mathcal{L}$ be the o torus with complex structure induce by canonical projection $\pi: \mathcal{C} \rightarrow T^2.$ 
Let $M=T^2 -\{p_1, p_2,p_3\}$, where
\begin{equation}
p_1=\pi(1/2),\ p_2=\pi(0) \ \text{and} \ p_3=\pi(i/2).
\end{equation}
The Weierstrass data  $(g, \eta)$ is given by

\begin{equation}\label{WR-V}
\left\{
\begin{array}{ll}
	 g &= c \, \displaystyle{\frac{(\wp - 3e_1)(\wp+\lambda)}{\wp'}},  \ \ c >0, \ \lambda> 0, \ \lambda \neq e_1,\\
	\mathbf \eta&=2\wp \, dz.
	\end{array}
	\right.
\end{equation}
The Figure \ref{Figura2} shows the zeros and poles of $g$, $\eta$ and $\phi_3$ on $F.$
\begin{figure}[h]
\subfigure[Zeros and polos of $g$\label{Figure1}]{
\centering
\includegraphics[totalheight=3cm]{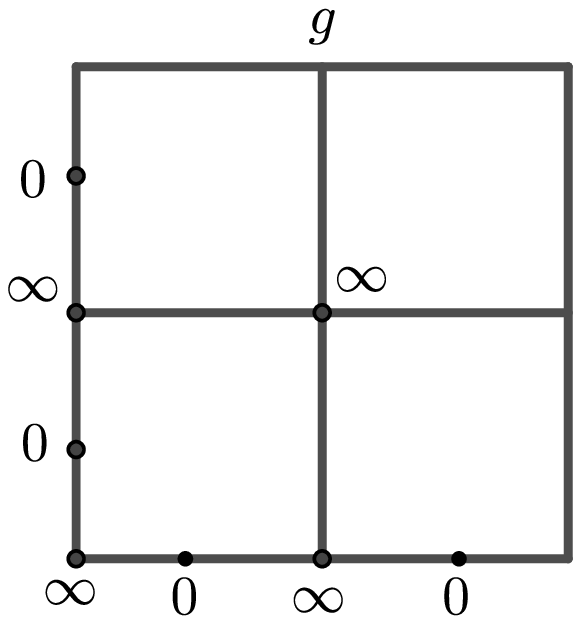}}
\subfigure[Zeros and polos of $\eta$\label{Figure2}]{
\includegraphics[totalheight=3cm]{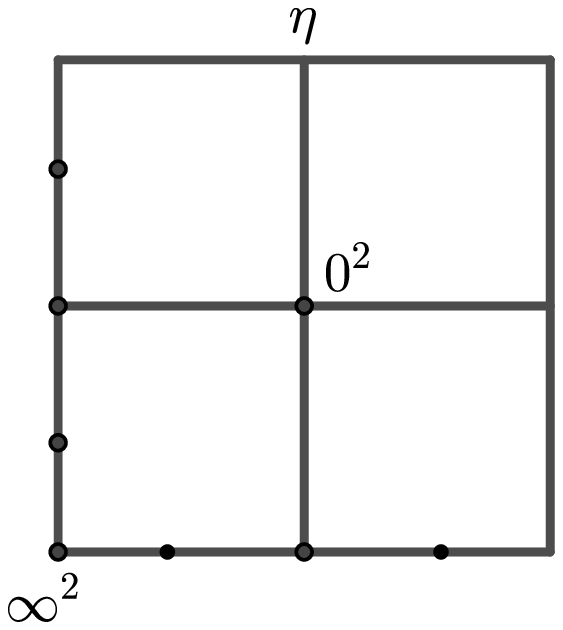}}
\subfigure[Zeros and polos of $\phi_3$\label{Figure3}]{
\includegraphics[totalheight=3cm]{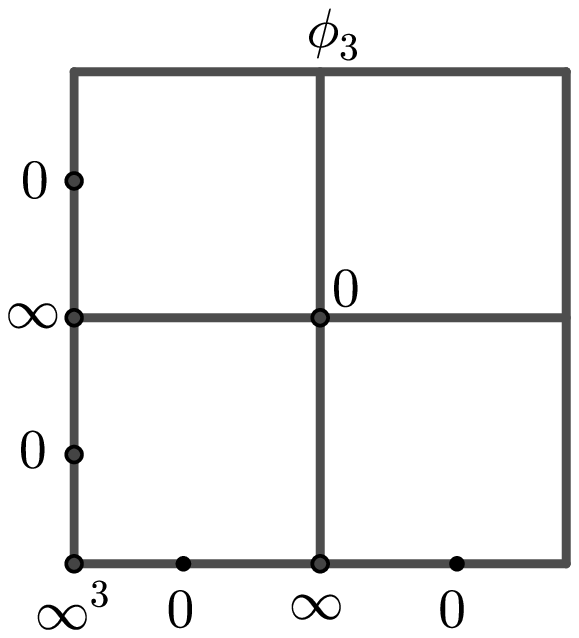}}
\caption{}
\label{Figura2}
\end{figure}

From \eqref{WR}, \eqref{EDPW} and \eqref{WR-V} we have
\begin{equation}\label{Phi1V}
\begin{aligned}
\phi_1&=\wp \, dz - \frac{c^2}{4}\cdot\frac{(\wp-3e_1)^2(\wp + \lambda)^2}{(\wp-e_1)(\wp+e_1)}\,dz,\\
\phi_1&=\wp \, dz - \frac{c^2}{8e_1}\left[\left(\frac{\wp^4}{\wp-e_1}-\frac{\wp^4}{\wp+e_1}\right)+(2\lambda-6e_1)\left(\frac{\wp^3}{\wp-e_1}-
\frac{\wp^3}{\wp+e_1}\right)+\right.\\
&+(\lambda^2-12e_1\lambda+9e_1^2)\left(\frac{\wp^2}{\wp-e_1}-\frac{\wp^2}{\wp+e_1}\right)+(18e_1^2\lambda-6e_1\lambda^2)\left(\frac{\wp}{\wp-e_1}-
\frac{\wp}{\wp+e_1}\right)+\\
&+\left.9e_1^2\lambda^2\left(\frac{1}{\wp-e_1}-\frac{1}{\wp+e_1}\right)\right]\, dz,\\ 
\end{aligned}
\end{equation}

\begin{equation}\label{Phi2V}
\begin{aligned}
\phi_2&=i\wp \, dz +i \frac{c^2}{4}\cdot\frac{(\wp-3e_1)^2(\wp + \lambda)^2}{(\wp-e_1)(\wp+e_1)}\,dz,\\
\phi_2&=i\wp \, dz +i \frac{c^2}{8e_1}\left[\left(\frac{\wp^4}{\wp-e_1}-\frac{\wp^4}{\wp+e_1}\right)+(2\lambda-6e_1)\left(\frac{\wp^3}{\wp-e_1}-
\frac{\wp^3}{\wp+e_1}\right)+\right.\\
&+(\lambda^2-12e_1\lambda+9e_1^2)\left(\frac{\wp^2}{\wp-e_1}-\frac{\wp^2}{\wp+e_1}\right)+(18e_1^2\lambda-6e_1\lambda^2)\left(\frac{\wp}{\wp-e_1}-
\frac{\wp}{\wp+e_1}\right)+\\
&+\left.9e_1^2\lambda^2\left(\frac{1}{\wp-e_1}-\frac{1}{\wp+e_1}\right)\right]\, dz,\\
\end{aligned}
\end{equation}
\begin{eqnarray*}
\phi_3&=&2c \frac{(\wp-3e_1)(\wp+\lambda) \wp}{\wp'} \,dz=\frac{c}{2}\wp'\frac{(\wp-3e_1)(\wp+\lambda)}{(\wp-e_1)(\wp+e_1)}\, dz,\\
 \phi_3&=&\frac{c}{4e_1}\left[\left(\frac{\wp' \wp^2}{\wp-e_1}-\frac{\wp'\wp^2}{\wp+e_1}\right)+
			(\lambda-3e_1)\left(\frac{\wp' \wp}{\wp-e_1}-\frac{\wp'\wp}{\wp+e_1}\right)-3e_1\lambda 
			\left(\frac{\wp'}{\wp-e_1}-\frac{\wp'}{\wp+e_1}\right) \right]\, dz.
\end{eqnarray*}
From \eqref{EqW6} and \eqref{EqW7} we have
\begin{equation}\label{Phi3V}
\phi_3=\frac{c}{4e_1}\left[2e_1 \wp' - (2e_1^2+2e_1\lambda)\frac{\wp'}{\wp-e_1}+(4e_1\lambda-4e_1^2)\frac{\wp'}{\wp+e_1}\right]\, dz.
\end{equation}
Using the Lemma \ref{lemma5} we have
\begin{eqnarray}\label{IPhi1V}
\int \phi_1&=&-\zeta(z)-\frac{c^2}{8e_1}\left[\left(\frac{e_1}{3}\wp'+\frac{5}{3}e_1^3 z-\frac{e_1^2}{2}\zeta(z-1/2)+\frac{e_1^2}{2}\zeta(z-i/2)\right)\right.+\nonumber\\
&+&(2\lambda-6e_1)\left(-2e_1\zeta(z)-\frac{e_1}{2}\zeta(z-1/2)-\frac{e_1}{2}\zeta(z-i/2)\right)+\nonumber\\
&+&\left(\lambda^2-12e_1\lambda+9e_1^2\right)\left(e_1 z -\frac{1}{2}\zeta(z-1/2)+\frac{1}{2}\zeta(z-i/2)\right)+\\
&+&\left(18e_1^2\lambda-6e_1\lambda^2 \right)\left(-\frac{1}{2e_1}\zeta(z-1/2)-\frac{1}{2e_1}\zeta(z-i/2) \right)+\nonumber\\
&+&\left.9e_1^2\lambda^2 \left(-\frac{1}{2e_1^2}\zeta(z-1/2)+\frac{1}{2e_1^2}\zeta(z-i/2)-\frac{z}{e_1} \right)\right]-\text{constant},\nonumber
\end{eqnarray}

\begin{eqnarray}\label{IPhi2V}
\int \phi_2&=&-i\zeta(z)+i\frac{c^2}{8e_1}\left[\left(\frac{e_1}{3}\wp'+\frac{5}{3}e_1^3 z-\frac{e_1^2}{2}\zeta(z-1/2)+\frac{e_1^2}{2}\zeta(z-i/2)\right)\right.+\nonumber\\
&+&(2\lambda-6e_1)\left(-2e_1\zeta(z)-\frac{e_1}{2}\zeta(z-1/2)-\frac{e_1}{2}\zeta(z-i/2)\right)+\nonumber\\
&+&\left(\lambda^2-12e_1\lambda+9e_1^2\right)\left(e_1 z -\frac{1}{2}\zeta(z-1/2)+\frac{1}{2}\zeta(z-i/2)\right)+\\
&+&\left(18e_1^2\lambda-6e_1\lambda^2 \right)\left(-\frac{1}{2e_1}\zeta(z-1/2)-\frac{1}{2e_1}\zeta(z-i/2) \right)+\nonumber\\
&+&\left.9e_1^2\lambda^2 \left(-\frac{1}{2e_1^2}\zeta(z-1/2)+\frac{1}{2e_1^2}\zeta(z-i/2)-\frac{z}{e_1} \right)\right]-\text{constant},\nonumber
\end{eqnarray}

\begin{equation}
\int \phi_3=\frac{c}{4e_1}\Big[2e_1 \wp - (2e_1^2+2e_1\lambda)\ln |\wp-e_1|+(4e_1\lambda-4e_1^2)\ln|\wp+e_1|\Big]-\text{constant}.
\end{equation}
Since $\displaystyle{\int\phi_3}$ is periodic, then 

$$\int_{\alpha} \phi_3=\int_{\beta} \phi_3 =0.$$
From Lemma \ref{lemma6}, it follows that
\begin{equation*}
\text{Re} \int_{\beta} \phi_1= \text{Re}\int_{\alpha} \phi_2 =0
\end{equation*}
and
\begin{eqnarray*}
\int_{\alpha} \phi_1= -\pi &-&\frac{c^2}{8e_1}\left[\frac{5}{3}e_1^3 + (2\lambda-6e_1)(-3e_1\pi)+\left(\lambda^2-12e_1\lambda+9e_1^2 \right)e_1+\right.\\
&+&\left.(18e_1^2\lambda-6e_1\lambda^2)\left(\frac{-\pi}{e_1}\right)+9e_1^2\lambda^2\left(\frac{-1}{e_1}\right)\right],
\end{eqnarray*}

\begin{eqnarray*}
\int_{\beta} \phi_2= -\pi &-& \frac{c^2}{8e_1}\left[\frac{5}{3}e_1^3 + (2\lambda-6e_1)(3e_1\pi)+\left(\lambda^2-12e_1\lambda+9e_1^2 \right)e_1+\right.\\
&+&\left.(18e_1^2\lambda-6e_1\lambda^2)\left(\frac{\pi}{e_1}\right)+9e_1^2\lambda^2\left(\frac{-1}{e_1}\right)\right].
\end{eqnarray*}
Therefore,
\begin{equation*}
\int_{\alpha} \phi_1= -\pi -\frac{c^2}{8e_1}\left[\left(\frac{5}{3}e_1^3+18e_1^2\pi+9e_1^3 \right)+\left(-24e_1\pi-12e_1^2 \right)\lambda+(-8e_1+6\pi)\lambda^2\right]
\end{equation*}
and
\begin{equation*}
\int_{\beta} \phi_2= -\pi -\frac{c^2}{8e_1}\left[\left(\frac{5}{3}e_1^3-18e_1^2\pi+9e_1^3 \right)+\left(24e_1\pi-12e_1^2 \right)\lambda+(-8e_1-6\pi)\lambda^2\right].
\end{equation*}
Thus, $\displaystyle{\text{Re} \int_{\alpha} \phi_1= \text{Re}\int_{\beta} \phi_2 =0},$ if and only if there exists a real number $\lambda$ such that
\begin{equation}\label{Eq2G}
\lambda^2-4e_1\lambda+3e_1^2=0 
\end{equation}
and
\begin{equation}\label{Eqc}
 c=\sqrt{\frac{6\pi}{33e_1\lambda-26e_1^2}}.
\end{equation}
The roots of the equation \eqref{Eq2G} are $\lambda_1=e_1$ and $\lambda_2 = 3e_2.$ By hypothesis $\lambda \neq e_1,$ then 
\begin{equation}\label{Eqc}
 \lambda=3e_1 \ \ \Rightarrow \ \  c=\frac{1}{e_1}\sqrt{\frac{6\pi}{73}}.
\end{equation}
Thus, we obtain 
\begin{equation}\label{DW-V}
\left\{
\begin{array}{ll}
	 g &= c \, \displaystyle{\frac{(\wp - 3e_1)(\wp+3e_1)}{\wp'}},  \ \ c = \frac{1}{e_1}\sqrt{\frac{6\pi}{73}}.\\
	\mathbf \eta&=2\wp \, dz.
	\end{array}
	\right.
	\end{equation}
The Weierstrass data of the above form, produce the symmetry of the surface that we require. The degree of the Gauss map $g$ equals $4$. Thus, the total curvature is $-16\pi.$

Now, we will show that M has no real period around $p_1=\pi(1/2),$ $p_2=\pi(0)$ and $p_3=\pi(i/2)$. To do this we will compute residues at points $p_1, p_2$ and $p_3.$ The functions $\phi_1$ and $\phi_2$ have poles of order two at $p_1, \ p_2$ and $p_3$ and have no residues, while the function $\phi_3$ has a simple pole at $p_1$ and $p_3$. Evaluating the residues of the elliptic function $\phi_3$ at $p_1$ and $p_3$, we have
\begin{equation*}
\text{Res}_{p_1} \phi_3= -\frac{c}{e_1}\wp''(1/2)=-4c\, e_1 \in \mathbb{R},
\end{equation*}
\begin{equation*}
\text{Res}_{p_3} \phi_3= \frac{c}{e_1}\wp''(i/2)=4c\, e_1 \in \mathbb{R},
\end{equation*}
This implies that $\text{Res}_{p_2} \phi_3=0.$

At $p_1, p_2$ and $p_3$,  $ds=\frac{1}{2}(1+|g|^2)|\eta|$ has a pole of order at least $2$. Thus,
\begin{equation}
\int_{\ell} (1+|g|^2)|\eta|=\infty,
\end{equation}
for any divergent curve $\ell$, around $p_1, \ p_2$ and $p_3$. Therefore, by Proposition \ref{Prop1}, we obtain a complete minimal surface $S=X(M)$
with genus one, two catenoid-type ends and one Enneper-type end, where $X(z)=(X_1(z),X_2(z),X_3(z)),$   $X(1/2,1/2)=(0,0,0),$
\begin{equation*}
\begin{aligned}
X_1&=\text{Re}\left\{-\zeta(z)-\frac{c^2}{4}\left[\frac{\wp'(z)}{6}+16e_1\zeta(z-i/2)-16e_1\zeta(z-1/2)-\frac{146}{3}e_1^2\,z \right]\right\}+\frac{12\pi^2}{73e_1},\\
X_2&=\text{Re}\left\{-i\zeta(z)+i\frac{c^2}{4}\left[\frac{\wp'(z)}{6}+16e_1\zeta(z-i/2)-16e_1\zeta(z-1/2)-\frac{146}{3}e_1^2\,z \right]\right\}+\frac{12\pi^2}{73e_1},\\
X_3&=\text{Re}\left\{\frac{c}{2}\left[\wp +4e_1  \ln \left|\frac{\wp+e_1}{\wp - e_1}\right| \right]\right\}, \ \ c = \frac{1}{e_1}\sqrt{\frac{6\pi}{73}}.
\end{aligned}
\end{equation*}

Now, we will study the symmetries of surface S=$X(M)$. To do this we will define on $F$ the curves:
\begin{equation*}
\begin{array}{lll}
&\zeta_1(u)=u, \ 0<u<1/2, \ \ &\zeta_2(u)=u, \ 1/2<u<1,\\
&\zeta_3(u)=\frac{i}{2}+u, \ 0<u<1, \ \ &\zeta_4(u)=iu, \ 0<u<1/2,\\
&\zeta_5(u)=iu, \ 1/2<u<1, \ &\zeta_6(u)=\frac{1}{2}+iu, \ 0<u<1,\\
&\zeta_7(u)=u+i(1-u), \ 0<u<1, \ &\zeta_8(u)=u+iu, \ 0<u<1.
\end{array}
\end{equation*}
From Lemma \ref{Karcher.1989} and Proposition \ref{Wohlgemuth.1991}, we have that $\gamma_j:=X\circ \zeta_j, $ $j=1,\ldots,6,$ are planar geodesics and $\gamma_j:=X\circ \zeta_j,$ $j=7,8,$ are two straight lines cross at 0 and at the Enneper end. We can easily show that $\gamma_j$,  $j=1,2,3,$ are contained in the $(x_1,x_3)$-plane, $\gamma_j$, $j=4,5,6,$ are contained in the $(x_2,x_3)$-plane and $\gamma_j, \ j=7,8,$ are contained in the lines $x_1\pm x_2=x_3=0.$ The Schwarz reflection principle for minimal surfaces, implies that surface $S$ has the $(x_1, x_3)$-plane and the $(x_2, x_3)$-plane as reflective planes of symmetry, and the surface is invariant under rotation by $\pi$ about the lines $x_1\pm x_2=x_3=0$.

Finally, substituting \eqref{Eqc} into \eqref{Phi1V}, \eqref{Phi2V} and \eqref{Phi3V}. From Lemma \ref{LemaS1}, we have the following properties:
\begin{equation}\label{refl1}
X(\beta(w_2+z))=( X_1, X_2,X_3)(\beta(w_2+z))=(X_1,-X_2,X_3)(w_2+z), 
\end{equation}
\begin{equation}\label{rot1}
X(\rho(w_2+z))=( X_1, X_2,X_3)(\rho(w_2+z))=( -X_2, X_1,-X_3)(w_2+z). 
\end{equation}
Let $G:=\langle \beta, \rho \rangle$ be the dihedral group with 8 elements. According to \eqref{refl1} and \eqref{rot1}, we may consider $G$ as acting on $S=X(M)$, 
$A_{\sigma}:G \times S \rightarrow S$, $A_{\sigma}(\sigma,X(z)):=X(\sigma(z))$,  by identifying the generators $\beta$ and $\rho$ with the orthogonal motions
\[
	A_{\beta}=\left[
	\begin{array}{ccc}
	1&0&0\\
	0&-1&0\\
	0&0&1
	\end{array}
	\right], \ \ \ 
	A_{\rho}=\left[
	\begin{array}{ccc}
	0&-1&0\\
	1&0&0\\
	0&0&-1
	\end{array}
	\right].
	\]
Therefore, the symmetry group of $S$ is $\{Id, A_{\beta}, A_{\rho}, A_{\rho}A_{\beta},  A_{\rho}^2A_{\beta}, A_{\rho}^3A_{\beta},A_{\rho}^2,A_{\rho}^3 \}.$ This completes the proof of the Theorem.

\begin{remark}
We note that if $\lambda=e_1,$ then 
\begin{equation*}
\left\{
\begin{array}{ll}
	 g &= c \, \displaystyle{\frac{(\wp - 3e_1)(\wp+e_1)}{\wp'}},  \ \ c=\frac{1}{e_1}\sqrt{\frac{6\pi}{7}}.\\
	\mathbf \eta&=2\wp \, dz.
	\end{array}
	\right.
	\end{equation*}
But, this is exactly the Weierstrass data of minimal surface described in Proposition \ref{S_VW}.
\end{remark}

\begin{remark}
From the Gackstatter \cite{Gackstatter.1976}, Jorge-Meeks \cite{Jorge.1983} formula, 
\begin{equation*}
C_T(S)=2\pi (2-2\mathbf{g}-N-\sum_{\nu=1}^N k_\nu), \ \text{where} \ k_{\nu} \ \text{is the order of the end,}
\end{equation*}
we see that the order of Enneper end is $3$, thus the minimal surface $S$ as described in Theorem \ref{Teorema1} is not embedded. I am working in a similar way to \cite{Hoffman.1985} to show that there exists a compact set $K \subset \mathbb{R}^3$ such that a region of $S$ contained in $K$ is embedded.  
\end{remark}

\begin{remark}
During the writing of this paper, I found some computer graphic pictures (see \cite{Weber.2019})  of a surface belonging to a one-parameter
family of minimal surfaces with two catenoid ends and one Enneper end.  
\end{remark}

\begin{question}
Are the surfaces in Proposition \ref{S_VW} and Theorem \ref{Teorema1} the only complete minimal surface with genus one or two?
\end{question}

\begin{question}
Is it possible to construct examples of complete minimal surfaces with genus $\mathbf{g}>1$ with two catenoid ends and one Enneper end?
\end{question}

\begin{question}
Is it possible to construct examples of complete minimal surfaces $S$ with genus $\mathbf{g}>1$ increasing dihedral symmetry in such a way that $S$ 
have two catenoid ends and one higher order Enneper end?
\end{question}

\bibliographystyle{amsplain} 
\bibliography{BiblGeometria}

\end{document}